\DeclarePairedDelimiter\abs{\lvert}{\rvert}%
\DeclarePairedDelimiter\norm{\lVert}{\rVert}%
\let\oldabs\abs
\def\abs{\@ifstar{\oldabs}{\oldabs*}}
\let\oldnorm\norm
\def\norm{\@ifstar{\oldnorm}{\oldnorm*}}
\theoremstyle{definition}
\newtheorem{theorem}{Theorem}[section]
\newtheorem{lemma}[theorem]{Lemma}
\newtheorem{proposition}[theorem]{Proposition}
\newtheorem{corollary}[theorem]{Corollary}
\newtheorem{definition}[theorem]{Definition}
\newtheorem{remark}[theorem]{Remark}
\newtheorem{question}[theorem]{Question}
\numberwithin{equation}{section}
\DeclareMathOperator{\Aut}{Aut}
\DeclareMathOperator{\Sym}{Sym}
\DeclareMathOperator{\diam}{diam}
\DeclareMathOperator{\supp}{supp}
\DeclareMathOperator{\im}{im}
\let\phi\varphi
\newcommand{\eps}{\varepsilon}
\newcommand{\actson}{\curvearrowright}
\newcommand{\EE}{\mathbb{E}} 
\newcommand{\RR}{\mathbb{R}}      
\newcommand{\ZZ}{\mathbb{Z}}      
\newcommand{\PP}{\mathbb{P}}      
\newcommand{\FF}{\mathbb{F}} 
\newcommand{\NN}{\mathbb{N}}     
\newcommand{\one}{\mathbbm{1}}
\newcommand\restr[2]{{
  \left.\kern-\nulldelimiterspace 
  #1 
  \vphantom{\big|} 
  \right|_{#2} 
  }}
\renewcommand{\cal}[1]{{\mathcal #1}}
\newcommand\GG{\mathbb G_\bullet}
\newcommand\GGG{\mathbb G_{\bullet\bullet}}
\DeclareMathOperator{\ran}{Ran}
\DeclareMathOperator{\esup}{ess\,sup}
\DeclareMathOperator{\Leb}{Leb}
\title{Skeletons and Spectra: Bernoulli graphings are relatively Ramanujan}
\author{H\'ector Jard\'on-S\'anchez, L\'aszl\'o M\'arton T\'oth }
\date{\today}
\begin{document}

\maketitle
\begin{abstract}
    The aim of this paper is to investigate the spectral theory of unimodular random graphs and graphings representing them. We prove that Bernoulli graphings are relatively Ramanujan with respect to their skeleton Markov chain. That is, the part of their spectrum that comes from the random labels falls within the appropriate Alon-Boppana bound. This result complements an example due to Fr\k{a}czyk of an ergodic unimodular random graph with almost sure spectral gap but non-expanding Bernoulli graphing.

    We also highlight connections of our work with the theory of finite random graphs. Exploiting the result of Bordenave and Collins on random lifts being relatively almost Ramanujan, we prove a strengthening of our main theorem for unimodular quasi-transitive quasi-trees.

\end{abstract}
\setcounter{tocdepth}{2}
\tableofcontents

\newpage
\section{Introduction}

Measurable group theory and combinatorics often investigate properties of \emph{graphings}. These are bounded degree graphs with vertices $V(\mathcal{G})=X$, where $(X, \mu)$ is a standard Borel probability space, and a symmetric Borel set of edges $E(\mathcal{G}) \subseteq X \times X$ with the additional property that partial Borel bijections along edges of the graph preserve $\mu$. Graphings arise naturally from probability measure preserving actions of countable groups, and also serve as limits of sequences of sparse finite graphs in the sense of Benjamini--Schramm \cite{BSprocesses}. 

This paper investigates the interplay between the spectral properties of graphings and the \emph{unimodular random graphs} obtained by considering the rooted connected component of a $\mu$-random vertex of $\mathcal{G}$. We denote the law of this (rooted) random graph by $\nu_{\mathcal{G}}$, and denote a random instance by $(G,o)$. 
We start by stating our main contribution and continue to introduce the relevant notions as well as to motivate the result.

\begin{theorem} \label{thm:intro.rel.ram}
    The Bernoulli graphing $\mathcal{B}$ of an ergodic unimodular random graph $(G,o)$ is Ramanujan relative to its skeleton. That is $\sigma_R(\mathcal{B}) \subseteq [-\rho(G,o), \rho(G,o)]$, where $\sigma_R$ denotes the random part of the spectrum. 
\end{theorem}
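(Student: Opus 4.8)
The plan is to work on the random subspace $R \subseteq L^2(V(\mathcal{B}))$ and show that the Markov operator $M$ restricted to $R$ has spectrum inside $[-\rho(G,o),\rho(G,o)]$. The key tool is that vectors in $R$ are, by definition, orthogonal to everything that only depends on the underlying rooted graph; in particular, conditioning on the graph structure and integrating out the i.i.d.\ labels kills them. So the strategy is to estimate $\langle M^{2n} f, f\rangle$ for $f \in R$ and extract the exponential growth rate, which governs the spectral radius of $M|_R$ via the spectral radius formula $\rho(M|_R) = \lim_n \|M^{2n}|_R\|^{1/2n}$ together with self-adjointness. First I would reduce to $f$ supported on vertices whose rooted graph lies in a fixed Benjamini--Schramm-typical set and whose dependence on the labels is ``local'' (measurable with respect to labels in a bounded radius ball); a density argument then handles general $f \in R$.

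The heart of the matter is a combinatorial/probabilistic estimate: $\langle M^{2n} f, f \rangle$ is an average over rooted graphs $(G,o)$ and label configurations $\xi$ of a sum over length-$2n$ closed walks in $G$, weighted by $f$ evaluated at the two endpoints. When we integrate over the i.i.d.\ labels $\xi$, the orthogonality to the structured subspace forces cancellation: a walk contributes only if the labels it ``sees'' near the two endpoints are correlated, which — because the labels are independent — requires the walk to actually travel between the two supports and, more importantly, to \emph{backtrack through} the region where $f$ depends on the labels rather than wandering off freely. The upshot should be that the surviving walks are essentially non-backtracking-like excursions in $G$, whose count grows like $\rho(G,o)^{2n}$ up to subexponential factors (using that the quenched and annealed spectral radii coincide for ergodic URGs, established earlier in the paper). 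This is where one pays the price: one needs a clean ``walk surgery'' or generating-function argument showing that the label-averaged walk count is comparable to the count of walks in $G$ that are non-backtracking outside a bounded window, and that these are counted by $\rho(G,o)$ and not by the larger structured/skeleton spectral radius.

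I expect the main obstacle to be precisely this bookkeeping of which walks survive the label average and controlling the resulting weights uniformly in $n$. Two technical points need care: (1) the labels live on vertices, so a walk that revisits a vertex sees the \emph{same} label again, which creates the correlations we want but also makes the walk-counting non-Markovian — one must track the ``trace'' of visited label-balls, and argue the number of walks with a large such trace is subexponentially negligible; (2) passing from the annealed quantity $\int \langle M^{2n} f, f\rangle$ back to a genuine operator-norm bound on $R$ requires the quenched-equals-annealed input and an ergodicity argument so that the exponential rate is an almost sure constant equal to $\rho(G,o)$. Once the estimate $\limsup_n \langle M^{2n} f, f\rangle^{1/2n} \le \rho(G,o)$ is in hand for a dense set of $f \in R$, self-adjointness of $M|_R$ upgrades it to $\sigma_R(\mathcal{B}) = \sigma(M|_R) \subseteq [-\rho(G,o), \rho(G,o)]$, which is the claim. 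The matching lower bound (that $\rho(G,o)$ is actually attained, i.e.\ Theorem~\ref{thm:intro.alon.boppana}) is a separate and easier direction via test functions, and is not needed for the stated theorem.
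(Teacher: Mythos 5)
Your high-level framework is right: reduce to block factors in $R$, estimate $\langle \mathcal M^n f, f\rangle$, use the independence of labels together with the mean-zero property of $f\in R$ to kill most of the sum, and finish with Beurling's spectral radius formula and the quenched-equals-annealed input. But the step you call ``the heart of the matter'' is described incorrectly, and the picture you draw would not close the argument.

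The cancellation is \emph{not} governed by the trajectory of the walk. In $\langle \mathcal M^n f, f\rangle = \int \sum_v p_n(o,v) f(G,\xi,v)\overline{f(G,\xi,o)}\,d\mu^*$ the Markov operator only moves the root; it never reads a label, and $f$ is evaluated only at the two endpoints. Once $f$ is a block factor of radius $r$, the label-average of a single term factors as soon as $B_r(o)\cap B_r(v)=\emptyset$, i.e.\ whenever $d(o,v)>2r$, and then it vanishes because $f\in R$ has conditional mean zero. This has nothing to do with whether the walk backtracks or ``passes through'' the support of $f$; a walk of length $n$ that wanders far away and returns contributes exactly as much as one that stays put, as long as its endpoints are within $2r$. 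So there is no ``walk surgery'', no tracking of the trace of visited vertices, and the surviving walks are \emph{not} non-backtracking-like excursions --- they are simply all walks whose endpoint lies in $B_{2r}(o)$. After Cauchy--Schwarz, the contribution of these is bounded by a constant (depending on $r$ and the degree bound, via a lower bound on the probability of a finite ball-type) times $\sum_{v\in B_{2r}(o)} p_n(o,v)$, which is comparable to $p_n(o,o)$ by elementary random-walk estimates. Applying Theorem~\ref{thm:q.vs.a} to the annealed quantity $\EE_{\nu^*}[p_n(o,o)]$ finishes the proof; the quenched-equals-annealed step converts this expectation into $\rho(G,o)^{n(1+o(1))}$, not (as you write) the passage from annealed to operator-norm, which is already handled by Beurling's formula on a spanning set of block factors. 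In short: the real argument is much softer than the Friedman-style trace method you are envisioning, and the non-backtracking/trace machinery you flag as the main obstacle is neither needed nor correct here; pursuing it would have led you into a genuinely harder and different problem.
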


\subsection{Bernoulli graphings and Ramanujan graphs}

For a finite graph $G$, we denote the absolute value of the second largest eigenvalue of its Markov operator $M$ by $\rho(G)$. 
When $G$ is a countably infinite, locally finite, connected graph, since the constant 1 vector is not in $\ell^2(V)$, there is no need to remove the trivial eigenvalue. In this case we set $\rho(G)=||M||_2$.
This quantity, called the \emph{spectral radius} of $G$, measures the exponential decay rate of the return probabilites $p_{2n}(o,o)$ of the simple random walk started from $o$. 
It is a well-known result of Kesten that in the infinite case $\rho(G)=1$ if and only if $G$ is amenable \cite{kesten1959full, kesten1959symmetric}. 

For a graphing $\mathcal{G}=(X,E,\mu)$ we write $\mathcal{M}$ for the Markov operator acting on $L^2(X)$. 
Analogously to the finite case, the spectral radius $\rho(\mathcal{G})$ is defined as the norm of $\mathcal{M}$ on the direct complement of the constant functions in $L^2(X)$. 
When $(G,o) \sim \nu_{\cal G}$, we refer to $\rho(G,o)$ as the \emph{local}, and $\rho(\cal G)$ as the \emph{global} spectral radius. When the graph or graphing is not regular, one must consider a degree-biased $L^2$-inner product in order to make the Markov operator self-adjoint, see~\ref{subsec:markov_operators}. We implicitly assume this throughout the introduction, and not burden the notation. 

It is natural to ask how $\rho(\mathcal{G})$ and $\rho(G,o)$ are related. Observe that $\rho(G,o)$ does not depend on the root $o$, so assuming ergodicity, $\rho(G,o)$ is a constant almost surely. We begin by showing that the global spectral radius is always bounded from below by the local one.

\begin{theorem}\label{thm:Alon-Boppana_intro}
    The inequality $\rho(G,o) \leq \rho(\mathcal{G})$ holds for all aperiodic graphings $\cal G$.
\end{theorem}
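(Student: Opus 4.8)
The plan is to show that, up to an arbitrarily small error, the operator norm of $\mathcal{M}$ on $L^2(X)$ (restricted away from constants) dominates the norm of the Markov operator $M$ on a typical connected component. The natural strategy is to transplant an almost-eigenfunction of $M$ on $(G,o)$ into a genuine $L^2(X)$ test function on the graphing. Concretely, fix $\lambda < \rho(G,o)$; by definition of the spectral radius of a component (and the spectral theorem, together with the fact that the spectrum of $M$ on $\ell^2(V(G))$ is symmetric and contains a point of modulus $\ge \rho(G,o)$ for $\nu_{\mathcal{G}}$-a.e.\ $(G,o)$), for a positive-measure set of roots there is a finitely supported function $\psi$ on $G$ with $\|M\psi - \mu_0 \psi\|_2 \ge \lambda \|\psi\|_2$ for some $|\mu_0| \ge \lambda$, i.e.\ a Rayleigh-quotient witness $\langle M\psi, \psi\rangle / \langle \psi,\psi\rangle$ close to $\pm\rho(G,o)$, or more robustly $\|M^n\psi\|/\|\psi\|$ growing like $\rho(G,o)^n$ up to subexponential factors. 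First I would pass to the cleaner statement that $\rho(\mathcal G) = \lim_n \|\mathcal M^n\|^{1/n}$-type quantities and that $\rho(G,o)$ is the $\nu_{\mathcal G}$-essential supremum of the componentwise spectral radii, which is constant a.s.\ by ergodicity.

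The key step is the transplantation. For each root $o$ in a positive-measure set $A \subseteq X$, choose (in a measurable way, using a measurable selection / the standard trick of enumerating finite subsets of the component via the rooted-graph structure) a finitely supported $\psi_o$ on $B_R(o)$, the $R$-ball, realizing a Rayleigh quotient within $\eps$ of $\rho(G,o)$. Then define $F \colon X \to \mathbb{R}$ by $F(x) = \psi_o(x)$ whenever $x$ lies in the $R$-ball of some $o \in A$ — but this is ambiguous when balls overlap, so instead the right move is the classical one: partition $A$ (or a large subset of it) into pieces $A_1, \dots, A_k$ that are "$(2R{+}1)$-separated" in the graphing metric, so that the transplanted bumps have disjoint supports. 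This uses a measurable greedy/Borel coloring argument: the "distance $\le 2R{+}1$" relation on $X$ has bounded degree (bounded-degree graphing), hence admits a Borel proper coloring with boundedly many colors (Kechris–Solecki–Todorcevic), and at least one color class $A_i$ has positive measure. On $\bigcup_{o \in A_i} B_R(o)$ the function $F = \sum_{o \in A_i} \psi_o$ has pairwise disjoint, "non-interacting" bumps, so $\langle \mathcal M F, F \rangle_{L^2(X)}$ and $\|F\|^2_{L^2(X)}$ decompose as integrals over $A_i$ of the componentwise quantities $\langle M \psi_o, \psi_o\rangle$ and $\|\psi_o\|^2$ (here the mass-transport / unimodularity of $\nu_{\mathcal G}$ is what lets one integrate a componentwise Rayleigh quotient against $\mu$ and keep the same ratio). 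Hence the Rayleigh quotient of $F$ on the graphing is within $\eps$ of $\rho(G,o)$, and $F$ can be taken orthogonal to constants (subtract its mean, or note $\psi_o$ may be chosen of zero mean / sign-changing when approximating $-\rho$, which is in the spectrum by symmetry). Letting $\eps \to 0$ gives $\rho(\mathcal G) \ge \rho(G,o)$.

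The main obstacle is the measurability and disjointification bookkeeping: one must choose the almost-eigenfunctions $\psi_o$ Borel-measurably in $o$ (so that $F$ is a bona fide element of $L^2(X)$), and one must ensure the supports genuinely decouple so that the Markov operator does not create cross terms between different bumps — this is exactly where the bounded-degree Borel-coloring separation is needed, and where aperiodicity of $\mathcal G$ (so there is no atomic part forcing short cycles through the root) is used. A secondary technical point is that $B_R(o)$ may not contain a within-$\eps$ witness for $\rho(G,o)$ uniformly in $o$; one handles this by first truncating to the set where the radius-$R$ witness is good and letting $R \to \infty$, using that $\nu_{\mathcal G}$-a.e.\ component has spectral radius equal to the a.s.\ constant. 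Modulo these standard descriptive-set-theoretic and exhaustion arguments, the inequality follows from the disjoint-bump Rayleigh-quotient computation.
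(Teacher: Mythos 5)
Your proposal takes a genuinely different route from the paper. You attack the arbitrary graphing $\mathcal G$ directly: build measurable almost-eigenfunctions $\psi_o$ on $R$-balls, disjointify their supports by a Kechris–Solecki–Todorcevic-style Borel coloring of the bounded-degree ``distance $\le 2R{+}1$'' graph on $X$, pick a positive-measure color class, and sum the transplanted bumps. The paper instead proves the stronger containment $\sigma(G,o)\subseteq\sigma_R(\mathcal B)$ \emph{for the Bernoulli graphing} first (Theorem~\ref{thm:spec.containment}), and then deduces the general aperiodic graphing statement (as Corollary~\ref{cor:alon.boppana}) from the Hatami--Lov\'asz--Szegedy minimality theorem, which says $\mathcal B$ is locally-globally contained in every aperiodic graphing representing the same URG. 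In the Bernoulli proof, the iid labels do double duty that your coloring does not: an auxiliary $[0,1]$-label $\xi_1$ plays the role of your Borel coloring (pick roots whose support-neighborhoods touch no root with smaller $\xi_1$-label), while an auxiliary $\{\pm1\}$-label $\xi_2$ multiplied onto the test function kills the conditional mean over labelings, placing the function directly into the random subspace $R\subseteq L_0^2$ with no further work. Your approach is self-contained and avoids invoking~\cite{HLS}, but proves only the radius inequality, not the full spectral containment.

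Two points in your sketch need more care. First, the orthogonality-to-constants step: you cannot simply ``subtract the mean'' of $F$, because if $\|P_0F\|^2=c$ then the Rayleigh quotient of $F-P_0F$ drops to $\frac{\langle\mathcal M F,F\rangle-c}{1-c}$, which is a \emph{decreasing} function of $c$ whenever $\langle\mathcal M F,F\rangle<1$; one must additionally ensure $c$ is small, e.g.\ by pairing each bump with an oppositely-signed translate or by choosing $\psi_o$ with sign-balanced mass. The paper's $\xi_2$-randomization makes this issue vanish. Second, your parenthetical that the spectrum of the Markov operator on a component is symmetric is false for non-bipartite graphs; all that is needed (and all that is true) is that $\rho(G,o)$ is the operator norm, hence one of $\pm\rho(G,o)$ lies in $\overline{\sigma(M_G)}$. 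Neither point is fatal, but both are real gaps in the sketch as written.
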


This result can be understood as an infinite version of the classic Alon-Boppana theorem, which we will consider more closely in the next subsection, and recall as Theorem~\ref{thm:alon_boppana} in section~\ref{subsubsec:ramanujan_graphs_and_graphings}. In fact, we establish a stronger statement, showing the containment of the entire spectrum in Corollary~\ref{cor:alon.boppana}. 

One cannot hope for an equality in Theorem \ref{thm:Alon-Boppana_intro} in full generality, as probability measure preserving actions of non-amenable groups need not have spectral gap. Nevertheless $\rho(G,o)=\rho(\mathcal{B})$ holds for \emph{Bernoulli graphings} $\mathcal B$ over Cayley graphs of groups \cite[Cor. 2.2]{lyons2011perfect} or regular trees \cite{backhausz2015ramanujan}. The Bernoulli graphing is constructed by adding iid uniform random $[0,1]$-labels to the vertices of a unimodular random graph $(G,o)$, and defining a graph on the set of such labelled configurations by connecting two if one can be obtained from the other by moving the root to a neighbor. The Bernoulli graphing is the appropriate version of the classical Bernoulli shift from symbolic dynamics in our setting. We give a precise definition in Section \ref{subsubsec:bernoulli_graphing}.

Following the terminology of \cite{backhausz2015ramanujan}, we say that $\mathcal{G}$ is \emph{Ramanujan} if $\rho(G,o)=\rho(\mathcal{G})$ holds. In \cite{bencs2024factor_nonamen}, it is proven that $\rho(G,o)<1$ implies $\rho(\mathcal{B}) < 1$ for Bernoulli graphings over quasi-transitive graphs, a weak version of the Ramanujan property. There it is asked whether this property holds for all unimodular random graphs. Here we present an unpublished counterexample due to Fr{\k{a}}czyk. 

\begin{theorem}[Fr{\k{a}}czyk]\label{thm:mikolay_intro}
    There exists an ergodic unimodular random graph with $\rho(G,o) < 1$ such that its Bernoulli graphing $\cal B$ is not strongly ergodic. In particular, $\rho(\cal B)=1$.
\end{theorem}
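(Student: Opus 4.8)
The statement packages two things: the existence of an ergodic unimodular random graph whose Bernoulli graphing $\mathcal{B}$ fails to be strongly ergodic, and the consequence $\rho(\mathcal{B})=1$. The ``in particular'' is the classical equivalence, for a probability measure preserving graphing, between strong ergodicity and the presence of a spectral gap for $\mathcal{M}$ on the orthogonal complement of the constants (Schmidt; Connes--Feldman--Weiss), so all the work lies in the construction. Moreover Theorem~\ref{thm:intro.rel.ram} already localizes the obstruction: since $\sigma_R(\mathcal{B})\subseteq[-\rho(G,o),\rho(G,o)]$ is bounded away from $1$ as soon as $\rho(G,o)<1$, any sequence of asymptotically invariant non-constant functions must carry its mass in the structured subspace, i.e.\ it must come from almost-invariant functions of the skeleton Markov chain on rooted isomorphism types. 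The plan is therefore to produce an ergodic unimodular random graph $(G,o)$ with (i) almost every component non-amenable, so $\rho(G,o)<1$, and (ii) skeleton Markov chain amenable, i.e.\ of spectral radius $1$; lifting the almost-invariant functions of the skeleton to $V(\mathcal{B})$ then shows $\mathcal{B}$ is not strongly ergodic, and $\rho(\mathcal{B})=1$ follows from the equivalence above.

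For the construction I would, following Fr{\k{a}}czyk, ``spread'' a non-amenable graph over an amenable pattern carrying a large amount of symmetry. Concretely, I would assemble the components out of non-amenable clusters indexed by an amenable structure of exponential growth (exponential growth being needed so that stationarity of the random rooted graph is compatible with the clusters being genuinely non-amenable rather than finite), arranged so that re-rooting within a cluster does not change the rooted isomorphism type, while re-rooting across clusters performs the random walk on the amenable index structure. I would check (i) by exhibiting the isoperimetric lower bound on the clusters --- equivalently, by embedding a fixed non-amenable subgraph and comparing Markov operators --- and (ii) by noting that the skeleton chain factors onto the amenable random walk on the index structure with fibres that are trivial from the point of view of rooted isomorphism types, so that no new expansion is introduced and F\o lner sets for the index walk pull back to F\o lner sets for the skeleton. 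Ergodicity of $(G,o)$ is then inherited from ergodicity of the index walk.

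The hard part is reconciling (i) and (ii), and this is exactly why such an example must be this indirect. Non-amenability of a.e.\ component forces exponential volume growth and a genuine Cheeger constant, hence exponential decay of vertex-to-vertex return probabilities; amenability of the skeleton forces the orbit, or isomorphism-type, structure to be F\o lner, hence only sub-exponential decay of type-to-type return probabilities. The two are compatible only if each ``level'' of a component is a single automorphism orbit that is exponentially large and sufficiently spread out, so that the exponentially many vertices of a given type at distance $\sim 2n$ precisely compensate the exponential decay of the vertex return probabilities. This cannot happen for quasi-transitive graphs, where the skeleton is a finite-state irreducible chain and hence automatically has a gap --- which is precisely why the weak Ramanujan property is available there \cite{bencs2024factor_nonamen} but not in general. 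I expect the two computations that carry the argument to be the Cheeger bound on the clusters and the F\o lner estimate on the type space, and the main obstacle to be arranging a single construction in which both hold at once.
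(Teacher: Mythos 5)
Your high-level diagnosis is correct and matches the paper's intuition: since the random part of the spectrum is controlled by $\rho(G,o)$, the loss of spectral gap must be encoded in the structured part, i.e.\ the skeleton Markov chain must itself fail to be strongly ergodic. The paper makes this explicit in Remark~\ref{rem:example}: the factor onto a non-strongly-ergodic system is built purely from the graph structure of $(G,o)$, with the Bernoulli labels playing no role. So far you are on the right track.

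However, the theorem's entire content is the construction, and your proposal stops exactly where the work starts. You describe a target (``spread a non-amenable graph over an amenable pattern'', ``non-amenable clusters indexed by an amenable structure of exponential growth'', ``F\o{}lner estimate on the type space'') but never produce a unimodular random graph satisfying it, and you yourself flag that ``the main obstacle [is] arranging a single construction in which both hold at once.'' The paper's construction is concrete and elegant: start with the $4$-regular tree $T$, viewed as the Cayley graph of $\FF_2$, and use the homomorphism $\phi\colon \FF_2\to\ZZ$, $\phi(a)=\phi(b)=1$. Decorate vertices with a $\ZZ/3\ZZ$-label (the $\phi$-image mod $3$, shifted by a uniform random offset, which breaks the ambiguity and allows recovering $\phi$ from the graph) and a $\{R,B\}$-label pulled back from a Bernoulli labelling of $\ZZ$ via $\phi$. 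These two labels, six values in all, are then encoded graph-theoretically by attaching a path of the appropriate length to each vertex. The resulting $(G,o)$ is ergodic (as a factor of the ergodic product of the $\ZZ$-Bernoulli shift with $\ZZ/3\ZZ$), has $\rho(G,o)<1$ by a Cheeger comparison with $T$ (the attached paths dilute the isoperimetric constant by at most a factor of $6$), and its rooted isomorphism type factors, via $\phi$, onto the Bernoulli shift on $2^\ZZ$, which is ergodic but not strongly ergodic; this non-strong-ergodicity pulls back to $\mathcal B$, and $\rho(\mathcal B)=1$ follows as you say.

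A few of your structural guesses also mislead. The ``amenable index structure'' in the actual example is $\ZZ$, which has linear, not exponential, growth; the compensation you worry about is not needed because the fibres of $\phi$ are not graph-theoretic ``clusters'' but cosets of $\ker\phi$ scattered through the tree, and non-amenability of the component comes from the whole tree, not from fibre-by-fibre non-amenability. Likewise, the paper does not exhibit F\o{}lner sets for the skeleton directly; it exhibits a factor map onto a known non-strongly-ergodic system ($2^\ZZ$ with the shift), which is a cleaner route. So: right target, plausible heuristics, but no construction and some of the structural intuitions do not match what actually works.
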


To construct this counterexample, a non-expanding action of a non-amenable group is encoded in the graph-theoretic structure of a unimodular random graph $(G,o)$ with $\rho (G,o) < 1$. Theorem~\ref{thm:intro.rel.ram} complements Theorem~\ref{thm:mikolay_intro} by showing that such an encoding is the only obstruction to Bernoulli graphings being Ramanujan. Precisely, this is the fact that Bernoulli graphings are \emph{relatively Ramanujan}. 

Let us now discuss the relative Ramanujan notion in more detail. Recall that the vertices of the Bernoulli graphing $\mathcal{B}$ are of the form $(G,\xi,o)$, where $\xi$ is iid uniform $[0,1]$-labeling of $V(G)$. The \emph{structured subspace} $S$ of  $L^2(V(\mathcal{B}))$ consists of those functions $f \in L^2 (V(\mathcal B))$ whose value $f(G,\xi,o)$ only depends on the graph structure $(G,o)$, and not on $\xi$. The orthogonal complement of $S$ is the \emph{random subspace} $R$. Both $S$ and $R$ are $\mathcal M$-invariant, so the spectrum of $\mathcal{M}$ splits accordingly into a \emph{structured} and \emph{random} part, denoted $\sigma_S$ and $\sigma_R$. In Section \ref{subsec:structured_spectrum}, we show that $\sigma_S$ is exactly the spectrum of the \emph{skeleton Markov chain} of $(G,o)$. Theorem \ref{thm:intro.rel.ram} shows that $\sigma_R$ satisfies the Ramanujan bound, hence the phrasing ``Ramanujan relative to its skeleton''. The notions discussed in this paragraph are introduced in detail in Sections~\ref{subsec:random_and_structured} and \ref{subsec:structured_spectrum}. 

Theorem~\ref{thm:intro.rel.ram} generalizes both of the Ramanujan graphing results mentioned before from \cite{backhausz2015ramanujan} and \cite{bencs2024factor_nonamen}. It also complements the general $\rho(G,o)\leq \rho(\cal G)$ result in the special case of Benoulli graphings. We state the obvious corollary for completeness. In the following statement $\rho_R (\mathcal B)$ denotes the spectral radius of the restriction of $\mathcal M$ to $R$, i.e.,~the spectral radius of the Bernoulli graphing on the random subspace. 

\begin{theorem}\label{thm:Bernoulli_spectral_radius_equality}
    Let $(G,o)$ be an ergodic unimodular random graph and $\cal B$ its associated Bernoulli graphing. Then $\rho (G,o) = \rho_R (\cal B)$.
\end{theorem}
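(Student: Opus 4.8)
The plan is to deduce Theorem~\ref{thm:Bernoulli_spectral_radius_equality} as a formal consequence of Theorem~\ref{thm:intro.rel.ram} together with the Bernoulli form of the measurable Alon--Boppana bound. Concretely, I would establish the two inequalities $\rho_R(\mathcal{B}) \leq \rho(G,o)$ and $\rho(G,o) \leq \rho_R(\mathcal{B})$ separately and combine them.

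For the first inequality, recall that $\mathcal{M}$ restricted to the random subspace $R$ is a bounded self-adjoint operator with respect to the degree-biased inner product of Section~\ref{subsec:markov_operators}, so its operator norm $\rho_R(\mathcal{B})$ equals $\sup\{\,|\lambda| : \lambda \in \sigma_R(\mathcal{B})\,\}$. Theorem~\ref{thm:intro.rel.ram} gives $\sigma_R(\mathcal{B}) \subseteq [-\rho(G,o), \rho(G,o)]$, and taking the supremum of absolute values over this containment yields $\rho_R(\mathcal{B}) \leq \rho(G,o)$ directly.

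For the reverse inequality I would invoke the spectral strengthening of the Alon--Boppana bound for Bernoulli graphings proved in Section~\ref{subsec:random_and_structured} (the refinement underlying Theorem~\ref{thm:Alon-Boppana_intro} and Corollary~\ref{cor:alon.boppana}), namely that the whole local spectrum already sits inside the random part: $\sigma(G,o) \subseteq \sigma_R(\mathcal{B})$. By ergodicity the Markov operator on $\ell^2(V(G))$ has an almost surely constant spectrum $\sigma(G,o)$ with $\rho(G,o) = \sup\{\,|\lambda| : \lambda \in \sigma(G,o)\,\}$ --- this is the point at which the quenched-versus-annealed identification of Section~\ref{subsec:quenched_vs_annealed} is used to make the statement unambiguous. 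Taking suprema over the containment $\sigma(G,o) \subseteq \sigma_R(\mathcal{B})$ then gives $\rho(G,o) \leq \rho_R(\mathcal{B})$, and combining with the first inequality proves the equality.

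Since both inputs are already in hand, there is essentially no remaining obstacle for the corollary itself; the only care required is the bookkeeping that ``spectral radius $=$ supremum of the modulus of the spectrum'' is applied to $\mathcal{M}|_R$ and to the $\ell^2(V(G))$-Markov operator with their correct self-adjoint (degree-biased) inner products, and that $\rho(G,o)$ is read as the almost-sure quenched quantity. If one wanted a self-contained derivation of $\rho(G,o) \leq \rho_R(\mathcal{B})$ without quoting the spectral refinement, the genuine work would be the Alon--Boppana construction performed \emph{inside} $R$: one must check that the finitely supported approximate eigenvectors of $\mathcal{M}$ witnessing the local spectral radius --- built from balls of $(G,o)$ that occur with positive probability by unimodularity --- can be chosen, or perturbed using fresh independent labels, so as to be orthogonal to the structured subspace $S$, i.e.\ to depend genuinely on the labeling $\xi$. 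This is exactly what is isolated in Section~\ref{subsec:random_and_structured}, so in the final writeup I would simply cite it and keep the proof of Theorem~\ref{thm:Bernoulli_spectral_radius_equality} to the two lines above.
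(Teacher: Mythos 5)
Your proposal is correct and matches the paper's own proof essentially verbatim: both directions are obtained by citing Theorem~\ref{thm:intro.rel.ram} for $\rho_R(\mathcal{B}) \leq \rho(G,o)$ and the spectral containment $\sigma(G,o) \subseteq \sigma_R(\mathcal{B})$ of Theorem~\ref{thm:spec.containment} for the reverse inequality. The only small inaccuracy is attributing a role here to the quenched-versus-annealed identification; by ergodicity $\rho(G,o)$ is an almost-sure constant and the Section~\ref{subsec:quenched_vs_annealed} result is used inside the proof of Theorem~\ref{thm:intro.rel.ram}, not in this corollary.
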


As part of the proof of Theorem \ref{thm:intro.rel.ram}, we need to relate the quenched and annealed spectral radii of an ergodic unimodular random graph. For the former, return probabilities of the random walk are calculated on a random realization of the graph, while for the latter, one considers the expectation according to the randomness of the rooted graph. See~\ref{subsec:quenched_vs_annealed} for precise definitions. The authors believe the following result is worth mentioning in the introduction.

\begin{proposition}\label{thm:intro_quenched_vs_annealed} 
   For a unimdoular random graph $(G,o)$ the annealed spectral radius is the essential supremum of the quenched spectral radii. In particular, if $(G,o)$ is ergodic, the two notions coincide.
\end{proposition}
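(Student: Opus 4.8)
The plan is to set up the two quantities carefully and then exploit the fact that the annealed return probability is, by definition, an average of the quenched ones over the (stationary, in the degree-biased sense) law $\nu$ of the rooted graph. Write $p_{2n}^{(G,o)}(o,o)$ for the $2n$-step return probability of the simple random walk on a fixed realization $(G,o)$, and let $q(G,o) = \limsup_n \big(p_{2n}^{(G,o)}(o,o)\big)^{1/2n}$ be the quenched spectral radius; this is a tail-measurable function of $(G,o)$, and standard arguments (the return probability does not depend on the root along its own component, and Fekete-type subadditivity after symmetrization) show it is actually a genuine limit and a component-invariant function. The annealed spectral radius is $\rho_{\mathrm{an}} = \limsup_n \big(\EE_\nu[p_{2n}^{(G,o)}(o,o)]\big)^{1/2n}$, which by the spectral-theoretic interpretation equals $\|\mathcal M\|$ on $L^2$ of the Bernoulli graphing (or directly the norm of the Markov operator associated to $\nu$), so it is the object appearing in Theorem~\ref{thm:Alon-Boppana_intro}.

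The first inequality, $\rho_{\mathrm{an}} \geq \esup\, q(G,o)$, is the easy direction: for any fixed realization, $\EE_\nu[p_{2n}(o,o)] \geq c\cdot p_{2n}^{(G_0,o_0)}(o_0,o_0)$ is false pointwise, but one recovers it in the limit by restricting the expectation to a positive-measure set on which $q(G,o)$ is close to its essential supremum and $p_{2n}^{(G,o)}(o,o)$ is comparable to $q(G,o)^{2n}$ up to subexponential factors; taking $2n$-th roots and letting the set shrink gives $\rho_{\mathrm{an}} \geq \esup\, q - \e$ for every $\e$. Concretely I would use that $\frac1{2n}\log p_{2n}^{(G,o)}(o,o) \to \log q(G,o)$ pointwise (a.e.), apply Egorov to get uniform convergence on a set $A$ of measure $\delta>0$ where $q \geq \esup\, q - \e$, and bound $\EE_\nu[p_{2n}(o,o)] \geq \int_A p_{2n}^{(G,o)}(o,o)\,d\nu \geq \delta\cdot (\esup\,q - 2\e)^{2n}$ for $n$ large.

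The reverse inequality $\rho_{\mathrm{an}} \leq \esup\, q(G,o)$ is the substantive point and the main obstacle: a priori averaging could inflate the exponential growth rate beyond the pointwise one. The key tool is that along each fixed component the return-probability sequence is essentially log-subadditive after symmetrizing, so $p_{2n}^{(G,o)}(o,o)^{1/2n}$ is monotone-like and in particular $p_{2n}^{(G,o)}(o,o) \leq q(G,o)^{2n}$ for \emph{all} $n$ (using $p_{2(m+n)}(o,o) \geq p_{2m}(o,o)p_{2n}(o,o)$ via reversibility, whence $(p_{2n}(o,o))^{1/2n}$ increases to $q$ and thus $p_{2n}(o,o) \le q^{2n}$). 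Then $\EE_\nu[p_{2n}^{(G,o)}(o,o)] \leq \EE_\nu[q(G,o)^{2n}] \leq (\esup\, q)^{2n}$, and taking $2n$-th roots gives $\rho_{\mathrm{an}} \leq \esup\, q$ directly, with no limit needed. Combining the two inequalities yields $\rho_{\mathrm{an}} = \esup\, q(G,o)$; the ``in particular'' clause is then immediate, since ergodicity forces the component-invariant function $q(G,o)$ to be a.s.\ constant, so its essential supremum equals its a.s.\ value, i.e.\ the quenched spectral radius is deterministic and equal to the annealed one. The one technical care point is the degree bias: all expectations are with respect to the degree-biased measure making $\mathcal M$ self-adjoint, and the reversibility identity $p_{2(m+n)}(o,o)\ge p_{2m}(o,o)p_{2n}(o,o)$ must be read in that same weighting, which it is, since it is just Cauchy--Schwarz / positivity of $\langle \mathcal M^{2m}\delta_o,\mathcal M^{2n}\delta_o\rangle$ on the fixed component.
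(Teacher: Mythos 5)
Your proof is correct and follows essentially the same route as the paper's. Both directions hinge on the same two observations the paper uses: the pointwise super-multiplicativity $p_{2(m+n)}(o,o)\geq p_{2m}(o,o)p_{2n}(o,o)$, which by Fekete gives $p_{2n}(G,o)\leq\rho_q(G,o)^{2n}$ for all $n$ and hence $\rho_a\leq\esup\rho_q$ immediately after integrating; and a positive-measure set argument for the reverse inequality, where you invoke Egorov explicitly while the paper runs the same idea as a proof by contradiction (fixing $\eps,\delta$, passing to a set of measure $\geq\eps$ on which $p_{2n}\geq(\rho_a+\delta)^{2n}$ for all $n\geq n_0$, and taking $2n$-th roots). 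The only cosmetic differences are the order in which you present the two inequalities and your explicit remark about the degree-biased weighting, which the paper leaves implicit; the super-multiplicativity is in fact a purely combinatorial concatenation of walks and does not need Cauchy--Schwarz, so your "reversibility" caveat is harmless but not necessary.
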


We found out during personal communication that Jan Grebik has also obtained Theorem~\ref{thm:intro.rel.ram} independently.



\subsection{Connection to new eigenvalues of finite random lifts}

We also show how Theorem~\ref{thm:intro.rel.ram} is a limiting version of results on finite random graphs. For finite $d$-regular graphs, the Alon-Boppana theorem states that, asymptotically, these cannot have larger spectral gaps than their universal cover, the $d$-regular infinite tree $T_d$. This gives a bound on how good expanders $d$-regular graphs can be. A finite $d$-regular graph $G$ is called \emph{Ramanujan} if it has such a large spectral gap, i.e.\ $\rho(G) \leq \rho(T_d)$. Constructing infinite families of $d$-regular graphs achieving this bound turned out to be a complicated problem, with examples coming from groups with Kazhdan's property (T) \cite{lubotzky1988ramanujan}. On the other hand, it was conjectured by Alon \cite{alon1986eigenvalues} and proved in a very technical 100-page paper 20 years later by Friedman \cite{friedman2008proof} that random $d$-regular graphs are \emph{almost Ramanujan} with high probability, see Theorem~\ref{thm:friedman}. Shorter, but still quite involved proofs have been found since, see \cite{bordenave2020new, chen2024new}.

One can think of the aforementioned result of Backhausz, Szegedy, and Virág on the Bernoulli graphing of a regular tree being Ramanujan (recalled as Theorem~\ref{thm:BSZV} below) as a measurable counterpart to Friedman's theorem. As is common in limiting settings, the measurable version is weaker, with a considerably softer proof. One can prove Theorem~\ref{thm:BSZV} directly from Friedman's result using the fact that random $d$-regular graphs locally converge to $T_d$, as sketched in section~\ref{subsubsec:ramanujan_graphs_and_graphings}. 

We highlight this connection by exhibiting another application. We exploit a strengthening of Friedman's theorem by Bordenave and Collins on new eigenvalues of random lifts of finite graphs \cite{bordenave2019eigenvalues} to prove a strengthening of Theorem~\ref{thm:intro.rel.ram} for a special class of unimodular random graphs. This class consists precisely of the unimodular random graphs arising as local limits of the models studied by Bordenave and Collins. 

\begin{theorem}\label{thm:qt.spec.eq}
    Let $G$ be a unimodular quasi-transitive quasi-tree, and $\cal B$ its Bernoulli graphing. Then $\sigma_R (\cal B) = \sigma (G)$.
\end{theorem}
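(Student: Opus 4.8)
The inclusion $\sigma(G) \subseteq \sigma_R(\mathcal B)$ is already available from the strengthened Alon--Boppana statement (Corollary~\ref{cor:alon.boppana}, applied on the random subspace), so the content of the theorem is the reverse inclusion $\sigma_R(\mathcal B) \subseteq \sigma(G)$. Combined with Theorem~\ref{thm:intro.rel.ram}, which gives $\sigma_R(\mathcal B) \subseteq [-\rho(G), \rho(G)]$, what remains is to rule out the existence of \emph{isolated} random-spectrum points, or more precisely spectral values of $\mathcal M|_R$ outside $\sigma(G)$ but inside the Ramanujan interval. The plan is to realize $\mathcal B$ — or rather the random part of it — as a local limit of the random lift models of Bordenave--Collins, and to transfer their ``no new eigenvalues outside the bulk'' conclusion to the limit.

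First I would invoke the characterization Theorem~\ref{thm:char.intro}: a unimodular quasi-transitive quasi-tree $G$ is of the form $\tilde H / \ker(\overline\phi)$ for a finite graph $H$ with a voltage assignment $\phi\colon \vec E(H)\to \FF_{d'}$. This presents $G$ as an infinite cover of a finite graph governed by a free group. The Bordenave--Collins theorem concerns random $n$-lifts (equivalently, random permutation representations $\FF_{d'}\to \Sym(n)$ applied to the voltages of $H$): with high probability the new eigenvalues of the random lift $H_n$ are within $\eps$ of the spectrum of the operator on the universal object, which here is exactly $\sigma(G)$ (the spectrum of the Markov operator pulled back to $\tilde H/\ker\overline\phi$). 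Second, I would show that the sequence $H_n$, suitably rooted and equipped with the labels, Benjamini--Schramm converges to the Bernoulli graphing $\mathcal B$ of $(G,o)$ — this is where the iid $[0,1]$-labels enter, matching the iid uniform permutation data in the lift. Third, spectral measures are continuous under Benjamini--Schramm convergence (weak-$*$ for the expected spectral measure), so the limit of the empirical spectral distributions of the $H_n$ is the spectral measure of $\mathcal M$ on $L^2(V(\mathcal B))$; the ``old'' (lifted-from-$H$) eigenvalues account exactly for the structured part $\sigma_S$, and the ``new'' eigenvalues — which Bordenave--Collins confine asymptotically to $\sigma(G)$ — account for $\sigma_R(\mathcal B)$. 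Taking $n\to\infty$ and $\eps\to 0$ yields $\sigma_R(\mathcal B)\subseteq \sigma(G)$.

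The main obstacle I anticipate is the bookkeeping that matches the \emph{random/structured decomposition} of $L^2(V(\mathcal B))$ with the \emph{old/new eigenvalue} decomposition in the finite lift picture, and doing so at the level of spectra (closed sets) rather than merely spectral radii. Benjamini--Schramm convergence controls the integrated density of states — a measure — and not the support directly, so I will need the one-sided control from Bordenave--Collins (no eigenvalues strictly outside a neighborhood of $\sigma(G)$, with high probability) to conclude a statement about $\mathrm{supp}$. Concretely: if $\lambda \in \sigma_R(\mathcal B)\setminus\sigma(G)$, then there is an interval $I \ni \lambda$ with $I\cap\sigma(G)=\varnothing$ and positive spectral mass of $\mathcal M|_R$ on $I$; by the Benjamini--Schramm spectral continuity this forces a linearly growing number of new eigenvalues of $H_n$ inside $I$ for large $n$, contradicting Bordenave--Collins. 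Care is needed because $G$ is non-regular in general, so one works throughout with the degree-biased inner product and the associated self-adjoint Markov operator, and must check that the lift $H_n$ inherits the correct degree distribution (it does, since lifts are degree-preserving). A secondary technical point is ensuring ergodicity/irreducibility so that the quenched spectrum is a.s.\ constant and equals $\sigma_R(\mathcal B)$ in the sense used here; this is where Proposition~\ref{thm:intro_quenched_vs_annealed} and the ergodicity hypotheses from the earlier sections are used.
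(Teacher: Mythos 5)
Your overall architecture matches the paper's: invoke Theorem~\ref{thm:char.intro} to write $G\cong\tilde H/\ker(\overline\phi)$, realize $G$ as the local limit of Bordenave--Collins $\phi$-random lifts $H_n$, and transfer their ``new eigenvalues confined to $\sigma(G)+[-\eps,\eps]$'' conclusion to the limit. However, the transfer step you propose has a genuine gap.

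You claim that Benjamini--Schramm convergence yields weak-$*$ convergence of the empirical spectral distributions of $H_n$ to ``the spectral measure of $\mathcal M$ on $L^2(V(\mathcal B))$,'' and that a point $\lambda\in\sigma_R(\mathcal B)\setminus\sigma(G)$ therefore forces a linearly growing number of new eigenvalues of $H_n$ near $\lambda$. This is not what BS convergence gives you. The BS limit of the empirical spectral distributions is the \emph{expected local spectral measure} $\int\langle E_\lambda\delta_o,\delta_o\rangle\,d\nu^\ast(G,o)$ --- the integrated density of states --- and its support is $\sigma(G,o)$, not $\sigma(\mathcal B)$ or $\sigma_R(\mathcal B)$. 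Spectral values of $\mathcal M$ on $L^2(\GG^+,\mu^\ast)$ can lie strictly outside the support of the IDS: this is exactly the phenomenon in Theorem~\ref{thm:mikolay_intro}, where $\rho(G,o)<1$ yet $\rho(\mathcal B)=1$. In particular, a $\lambda\in\sigma_R(\mathcal B)\setminus\sigma(G)$ carries \emph{zero} IDS mass, so BS convergence of spectral measures would not force any $H_n$-eigenvalues near $\lambda$; your contradiction never materializes. The argument is in effect circular: you would be assuming the very conclusion that $\sigma_R(\mathcal B)$ coincides with the support of the local spectral measure.

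What the paper does instead, and what is genuinely needed, is to transfer a \emph{test vector} rather than a spectral measure. Since $\lambda\in\sigma_R(\mathcal B)$ and $\mathcal M|_R$ is self-adjoint, there is (by density of block factors in $R$, Corollary~\ref{cor:proj.block}) a normalized block factor $f\in R$ of finite radius and complexity with $\|(\mathcal M-\lambda)f\|<\eps$. Lemma~\ref{lemma:quasi.random} supplies quasi-random colorings $c_n$ of $H_n$; pulling $f$ back through the sampling maps gives $g_n\in\ell^2(H_n,\deg)$ with $\|g_n\|\to 1$ and $\|(M_n-\lambda)g_n\|\to\|(\mathcal M-\lambda)f\|<\eps$. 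The characterization of $R$ (Corollary~\ref{cor:random.char}) combined with Lemma~\ref{lemma:quasi.random} shows $\langle g_n,\one_{\tilde h_n}\rangle\to 0$ for every $h\in V(H)$, so $g_n$ is asymptotically in the orthogonal complement of the old (lifted) eigenspace. This produces, for large $n$, a new approximate eigenvalue of $H_n$ in $\lambda+[-\eps,\eps]$, which is disjoint from $\sigma(G)+[-\eps,\eps]$, contradicting Theorem~\ref{thm:BC}. The essential repair to your proposal is therefore to replace the IDS/weak-$*$ argument with this explicit passage of an approximate eigenfunction from $R$ down to the finite lifts.
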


To identify that the graphs arising as limits of the models treated by Bordenave and Collins are exactly unimodular quasi-transitive quasi-trees, we build on \cite{Stallings} and \cite{BK} to prove a characterization that we did not find explicitly stated in the literature.

\begin{theorem}\label{thm:char.intro}
    Let $G$ be a locally finite, connected graph. Then the following are equivalent.
    \begin{itemize}
         \item[(i)]  $G$ is a unimodular quasi-transitive quasi-tree. 
        \item[(ii)] There exists a free quasi-transitive action $\FF_d \actson G$ for some $d$.
        \item[(iii)]  There exists a finite graph $H$ and a map $\phi \colon \vec{E}(H) \to \FF_{d'}$ for some $d'$ with $\phi(v,u)=\phi(u,v)^{-1}$ for all $(u,v)\in \vec{E}(H)$ such that writing $\tilde H$ for the universal cover and  $\overline{\phi}:\pi_1(H) \to \FF_{d'}$ for the homomorphism induced by $\varphi$, we have $G\cong \tilde H / \ker (\overline\phi)$.
    \end{itemize}
\end{theorem}

Theorem~\ref{thm:char.intro} adds to the characterizations of (quasi-transitive or Cayley) graphs
that are quasi-isometric to trees by Antolín \cite{antolin2011cayley}, Hamman, Lehner, Miraftab and Rühmann \cite{Stallings}, Krön and Möller \cite{kron2008quasi}, Manning \cite{manning2005geometry} and Woess \cite{woess1989graphs}.


\vspace{0.3cm}
The paper is structured as follows. In Section~\ref{sec:prelim} we introduce the basic notions used throughout the paper, as well as the random and structured parts of the global spectrum and their basic properties. In Section~\ref{sec:warmup} we prove the results motivating our work, namely Theorems~\ref{thm:Alon-Boppana_intro} and \ref{thm:mikolay_intro}, as well as Theorem~\ref{thm:intro_quenched_vs_annealed} on the annealed versus quenched spectral radii. In Section~\ref{sec:ber.ram} we prove Theorem~\ref{thm:intro.rel.ram}. Finally, in Section~\ref{sec:fin.to.inf} we treat the connections to finite random graphs, proving Theorems~\ref{thm:qt.spec.eq} and \ref{thm:char.intro}.

\paragraph{Acknowledgements.}
    The authors are grateful to Miklós Abért, Charles Bordenave, Miko\l aj Fr{\k{a}}czyk, Ugo Giocanti, Samuel Mellick, Konrad Wr\'{o}bel, and Jan Grebík for helpful discussions. 

Héctor Jardón-Sánchez was supported by the ERC Starting Grant “Limits of Structures
in Algebra and Combinatorics” No.~805495 and the Dioscuri program initiated by the Max Planck Society,
jointly managed by the National Science Centre (Poland), and mutually funded by the Polish
Ministry of Science and Higher Education and the German Federal Ministry of Education
and Research. 

László Márton Tóth was supported by the National Research, Development and Innovation Fund grants KKP-139502 and STARTING 150723.

\section{Preliminaries}\label{sec:prelim}

\subsection{Fundamental notions and results}

In this section, we briefly introduce the basic notions of the paper: local convergence, unimodular random graphs, Bernoulli graphings, Markov operators, and Ramanujan graphs. Throughout this section, and for the rest of the paper, we assume that all graphs considered have degree bounded by $D\in \NN$.


\subsubsection{Unimodular random graphs} \label{subsubsec:URG}

For a detailed introduction to unimodular random graphs, graphings, and local convergence of finite graphs, we refer the reader to \cite[Part 4]{lovasz2012large}. This fundamental reference applies to Sections \ref{subsubsec:URG} through \ref{subsubsec:bernoulli_graphing}.

If $G$ is a graph, we denote by $V(G)$ and $E(G)$ its vertex and edge set respectively. If $\Xi$ is a standard Borel space, a \emph{$\Xi$-labelled graph} is a pair $(G,\xi)$ such that $G$ is a graph together with a map $l\colon V(G) \to \Xi$. A \emph{rooted graph} is a pair $(G,o)$ where $G$ is a graph and $o\in V(G)$ is a specified vertex called the \emph{root}. Similarly, a \emph{doubly rooted graph} is a triple $(G,o,o')$ such that $G$ is a graph and $o,o' \in V(G)$. Two rooted labelled graphs $(G,\xi,o)$ and $(H,\eta,p)$ are \emph{isomorphic}, denoted by $(G,\xi,o) \cong (H,\eta,p)$ if there exists a graph isomorphism $\alpha\colon G\to H$ such that $\alpha (o) = o'$ and $\xi = \eta\circ \alpha$. Isomorphism of doubly rooted graphs is defined analogously, requiring the graph isomorphism to preserve the roots. 

We use $\GG^\Xi$ (resp.~$\GGG^\Xi$) to denote the standard Borel space of isomorphism classes of connected rooted (resp.~doubly rooted) $\Xi$-labelled graphs with degree bounded by $D\in \NN$. 
In a slight abuse of notation we denote elements of $\GG^\Xi$ by pairs $(G,\xi,o)$, tacitly assuming that the latter is a representative in an equivalence class. We denote simply by $\GG$ or $\GGG$ the case in which $\Xi$ is a singleton, and in this case we omit reference to the labelling. We use $\GG^+$ to denote the setting where $\Xi = [0,1]$. (In this section, we will only work with unlabeled graphs, but will turn to labeled graphs in~\ref{subsubsec:bernoulli_graphing}.)

A \emph{random element} of $\GG^\Xi$ is a random variable $(G,\xi,o)\colon(\Omega,\PP)\to \GG^\Xi$, where $(\Omega,\PP)$ is an auxiliary standard probability space. A \emph{unimodular random graph} is a random element $(G,o)\in \GG$ satisfying the \emph{Mass Transport Principle}: for every Borel $f\colon \GGG \to \RR_{\geq 0}$, $$
\int_{\GG} \sum_{o' \in V(G)} f(G,o',o) d(G,o) = \int_{\GG} \sum_{o' \in V(G)} f(G,o,o') d(G,o).
$$

By an \emph{event} in $\GG$ we refer to a Borel subset of $\GG$. An event $U$ is \emph{rerooting invariant} if $(G,o) \in U$ implies $(G,o')\in U$ for every $o' \in V(G)$. A unimodular random graph $(G,o)$ is \emph{ergodic} if for every rerooting invariant event $U \subset \GG$, $$
\PP [ (G,o) \in U] = 0 \textrm{ or } 1.
$$

\subsubsection{Local convergence}

A sequence $G_n$ of bounded degree finite graphs is said to be \emph{locally (or Benjamini-Schramm) convergent} if, when choosing a uniform random root $o_n \in V(G_n)$, the random rooted graph $(G_n, o_n)$ converges in distribution in $\GG$. For further reference, we denote by $\nu_{G_n}$ the distribution of $(G_n, o_n)$. The limit of a convergent sequence is not a finite graph anymore. In fact, there are two established ways of representing limits of convergent sequences. One is \emph{unimodular random graphs}, defined in the previous section (\ref{subsubsec:URG}). The other is \emph{graphings}, which we proceed to introduce (\ref{subsubsec:graphing}). The core topic of our investigation is the interplay between the two viewpoints.

\subsubsection{Graphings} \label{subsubsec:graphing}

If $X$ is a standard Borel space, a \emph{Borel graph} is a symmetric Borel subset $\cal G \subset X \times X$. The degree map $\deg \colon X \to \NN$ is Borel and assigns to each $x\in X$ its $\cal G$ degree $\deg(x)$. We will only consider Borel graphs for which there exists $D\in \NN$ such that $\deg (x) \leq D$ for every $x\in X$.

A \emph{graphing} is a triple $(\cal G,X,\mu)$ where $\cal G$ is a Borel graph on a standard Borel space $X$ and $\mu$ a Borel probability measure on $X$ such that the following Mass Transport Principle is satisfied: for every $f\colon \cal G \to \RR_{\geq 0}$, $$
\int_X \sum_{y\in V(G_x)} f(y,x) d\mu(x) = \int_X \sum_{y\in V(G_x)} f(x,y) d\mu(x).
$$
Let us denote the $\cal G$-connected component of a vertex $x\in X$ by $\cal G_x$. Picking $x$ randomly according to $\mu$, the random rooted graph $(\cal G_x,x)$ turns out to be unimodular, and we denote its distribution by$\nu_{\mathcal{G}}$. 
We say $\mathcal{G}$ \emph{represents} a unimodular random graph with distribution $\nu$ if $\nu_{\mathcal{G}}=\nu$, and that a sequence of finite graphs $(G_n)$ \emph{locally converges} to $\cal G$ if $\nu_{G_n} \to \nu_{\cal G}$ weakly.

Every unimodular random graph can be represented by some graphing, for example, the Bernoulli graphing defined below. This representation is very much not unique: graphings that are not measurably isomorphic can represent the same unimodular random graphs, and hence be limits of the same sequence of finite graphs.

\subsubsection{Bernoulli graphings} \label{subsubsec:bernoulli_graphing}

The \emph{Bernoulli graph} $\cal B$ is the Borel graph on $\GG^+$ defined by setting an edge between $(G,\xi,o)$ and $(H,\eta,p)$ if and only if there exists a neighbour $o'$ of $o$ in $G$ such that $(G,\xi,o') \cong (H,\eta,p)$. If $(G,o)$ is a unimodular random graph, its \emph{Bernoulli graphing} is $(\cal B,\GG^+,\mu)$ where the measure $\mu$ is defined by $$
\int_{\GG^+} f d\mu = \int_{\GG} \int_{[0,1]^{V(G)}} f(G,\xi,o) d\Leb^{\otimes V(G)} (\theta) d(G,o).
$$
whenever $f\colon \GG^+ \to \RR_{\geq 0}$ is any Borel map and $\Leb$ denotes the Lebesgue measure on $[0,1]$. In order to simplify notation, in the sequel we will denote $\beta_{G} := \Leb^{\otimes V(G)}$. 

One can sample a $\mu$-random element from $\GG^+$ by sampling the unimdoular random graph $(G,o)$ first, and then placing iid uniform $[0,1]$-valued labels on its vertices. Observe that, despite the vertex set of the Bernoulli graphing not depending on $(G,o)$, the support of $\mu$ does depend on the distribution of $(G,o)$.

\begin{remark}\label{rem:change.labels}
    By the Isomorphism Theorem for standard probability spaces \cite[Theorem 17.41]{kech}, we may assume to our convenience that the labels of the Bernoulli graphing are sampled in an iid fashion from any other standard probability space $(\Xi, \lambda)$ instead of $([0,1],\Leb)$. This observation will be used in several proofs throughout the paper. 
\end{remark}

\subsubsection{Operators and spectra}

For the next two subsections, we turn our attention to operator theory. We now recall a few fundamental facts about the spectral theory of operators on Hilbert spaces. Then we focus our attention on Markov operators.

 Let $(H, \langle,\rangle)$ be a Hilbert space and $T$ a bounded operator on $H$. The \emph{spectrum} of $T$ is the closed subset $$
\sigma(T) := \{\lambda \in \mathbb{C} : \lambda - T\ \text{is not invertible}\},
$$
and the \emph{spectral radius} of $T$ is the quantity $$
\rho(T) := \sup_{\lambda\in \sigma(T)} |\lambda|.
$$

Recall that 
self-adjoint operators $T$ have real spectrum $\sigma(T)\subset \RR$, and every element in $\sigma(T)$ is an approximate eigenvalue: 

\begin{proposition}[Prop.~6.9 in \cite{Conway}]\label{prop:app.eval}
    Let $T$ be a self-adjoint operator on a Hilbert space $H$. Then $\lambda\in \sigma(T)$ if and only if there exists a sequence $(v_n)_n \subset H$ such that $\|v_n\| = 1$ for every $n\in \NN$ and $\|(\lambda - T)v_n\| \to 0$.
\end{proposition}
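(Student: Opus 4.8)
The plan is to prove the two implications separately, with the forward direction essentially trivial and the reverse direction using self-adjointness in an essential way.

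For the ``if'' direction, suppose such a sequence $(v_n)$ exists. If $\lambda - T$ were invertible, its inverse $S := (\lambda-T)^{-1}$ would be a bounded operator, and then $1 = \|v_n\| = \|S(\lambda - T)v_n\| \le \|S\|\,\|(\lambda-T)v_n\| \to 0$, a contradiction. Hence $\lambda \in \sigma(T)$. (This half does not use self-adjointness.)

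For the ``only if'' direction I would argue by contraposition. Suppose no such sequence exists; then there is a constant $c > 0$ with $\|(\lambda - T)v\| \ge c\|v\|$ for all $v \in H$, i.e.\ $\lambda - T$ is bounded below. Since $T$ is self-adjoint, $\sigma(T) \subseteq \RR$, so we may assume $\lambda \in \RR$, and then $\lambda - T$ is itself self-adjoint. Being bounded below, $\lambda - T$ is injective and has closed range: if $(\lambda-T)v_n \to w$, then $(v_n)$ is Cauchy by the lower bound, hence $v_n \to v$ for some $v$, and $w = (\lambda-T)v$. Next I use the standard identity $\overline{\ran(\lambda-T)} = \ker\big((\lambda - T)^*\big)^\perp = \ker(\lambda-T)^\perp = H$, where the middle equality is self-adjointness and the last one uses that $\lambda - T$ is injective. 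Combining, $\ran(\lambda - T)$ is both closed and dense, hence all of $H$. Thus $\lambda - T$ is a bounded bijection, so by the open mapping theorem its inverse is bounded, i.e.\ $\lambda \notin \sigma(T)$.

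The only place where self-adjointness does real work — and the step I would flag as the crux — is the passage from ``bounded below'' (equivalently, injective with closed range) to ``surjective''. For a general bounded operator this fails (e.g.\ the unilateral shift), and one genuinely needs $(\lambda - T)^* = \lambda - T$ to deduce density of the range from injectivity. Everything else is soft functional analysis: the lower-bound estimate, the Cauchy argument for closedness of the range, and the open mapping theorem.
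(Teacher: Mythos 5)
The paper does not give a proof of this proposition; it simply cites Conway's \emph{A Course in Functional Analysis} (the result is a standard fact about self-adjoint operators). Your proof is correct and is the standard argument one would find in any functional analysis text, including Conway's.

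A couple of minor remarks. First, you could avoid the case split on $\lambda\in\RR$ versus $\lambda\notin\RR$: for self-adjoint $T$ and $\lambda=a+ib$ with $b\neq 0$ one has $\|(\lambda-T)v\|^2=\|(a-T)v\|^2+b^2\|v\|^2\geq b^2\|v\|^2$, so $\lambda-T$ is automatically bounded below, and the rest of your argument applies verbatim using $\ker((\lambda-T)^*)=\ker(\bar\lambda-T)=\{0\}$ (again by the same estimate). This makes the proof uniform in $\lambda$. Second, in the last step you do not actually need the open mapping theorem: once $\lambda-T$ is a bounded-below bijection, its set-theoretic inverse is automatically bounded with $\|(\lambda-T)^{-1}\|\leq 1/c$. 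These are cosmetic points; the crux of the matter --- using self-adjointness to pass from ``injective with closed range'' to ``surjective'' via $\overline{\ran(\lambda-T)}=\ker((\lambda-T)^*)^\perp$ --- is exactly the right observation, and your remark that this step fails for general operators (e.g.\ the unilateral shift) is well placed.
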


The following version of Beurling's spectral radius formula \cite[Theorem 1.2.7]{murphy}, which can be found in \cite{backhausz2015ramanujan}, will be useful throughout the paper. 

\begin{proposition}
    Let $T$ be a bounded self-adjoint operator on a Hilbert space and $U \subset H$ a set of normalised vectors spanning $H$. Then $$
    \rho(T) = \sup_{v\in U} \left(\limsup_{n\to \infty} \sqrt[n]{\langle T^n v, v \rangle} \right).
    $$ 
\end{proposition}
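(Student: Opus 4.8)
The plan is to prove the two inequalities between $\rho(T)$ and $r:=\sup_{v\in U}\bigl(\limsup_{n\to\infty}\sqrt[n]{\langle T^n v,v\rangle}\bigr)$ separately. Throughout we use that $\sigma(T)\subseteq\RR$ is nonempty and compact (closed and contained in $\{|\lambda|\le\|T\|\}$), so the continuous function $\lambda\mapsto|\lambda|$ attains its maximum $\rho(T)$ on it.

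For the inequality $r\le\rho(T)$, fix a normalised $v\in U$. By Cauchy--Schwarz, submultiplicativity of the operator norm, and the identity $\|T\|=\rho(T)$ for self-adjoint $T$, we get $\langle T^n v,v\rangle\le\lvert\langle T^n v,v\rangle\rvert\le\|T^n v\|\le\|T^n\|\le\|T\|^n=\rho(T)^n$. Hence $\sqrt[n]{\langle T^n v,v\rangle}\le\rho(T)$ for every $n$ (when the inner product is negative and $n$ is odd, its real $n$-th root is itself negative, so the bound is trivial), and taking $\limsup$ and then the supremum over $U$ gives $r\le\rho(T)$. This direction is routine.

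The substance of the argument is the reverse inequality $r\ge\rho(T)$, for which I would invoke the spectral theorem for the bounded self-adjoint operator $T$. Let $E$ be its projection-valued measure, so that for each $v$ the set function $\mu_v(A):=\langle E(A)v,v\rangle=\|E(A)v\|^2$ is a finite positive measure on $\RR$ with $\supp\mu_v\subseteq\sigma(T)$ and $\langle T^n v,v\rangle=\int\lambda^n\,d\mu_v(\lambda)$. Pick $\lambda_0\in\sigma(T)$ with $|\lambda_0|=\rho(T)$, fix $\varepsilon\in(0,\rho(T))$, and set $I:=(\lambda_0-\varepsilon,\lambda_0+\varepsilon)$, an open interval all of whose points satisfy $|\lambda|\ge\rho(T)-\varepsilon$. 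The key claim is that $\mu_v(I)>0$ for some $v\in U$: indeed, if $\mu_v(I)=\|E(I)v\|^2=0$ for every $v\in U$, then $E(I)$ vanishes on $U$, hence — being bounded and $U$ spanning $H$ — on all of $H$, so $E(I)=0$; but this forces $\sigma(T)\cap I=\varnothing$, contradicting $\lambda_0\in\sigma(T)\cap I$. Fixing such a $v$ and putting $c:=\mu_v(I)>0$, for even $n$ the integrand $\lambda^n$ is nonnegative everywhere and at least $(\rho(T)-\varepsilon)^n$ on $I$, so
\[
\langle T^n v,v\rangle=\int\lambda^n\,d\mu_v(\lambda)\ \ge\ \int_I\lambda^n\,d\mu_v(\lambda)\ \ge\ c\,(\rho(T)-\varepsilon)^n .
\]
Taking $n$-th roots along even $n$ gives $\limsup_{n}\sqrt[n]{\langle T^n v,v\rangle}\ge(\rho(T)-\varepsilon)\lim_{n}c^{1/n}=\rho(T)-\varepsilon$, hence $r\ge\rho(T)-\varepsilon$; letting $\varepsilon\downarrow 0$ yields $r\ge\rho(T)$, and combining with the previous paragraph finishes the proof.

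The only non-formal step is the key claim in the second inequality — that the spanning set $U$ detects the spectral value $\lambda_0$ of maximal modulus — and this is precisely where both the spectral theorem and the hypothesis that $U$ spans $H$ enter; everything else is Cauchy--Schwarz, compactness of $\sigma(T)$, and $\|T\|=\rho(T)$ for self-adjoint operators. One could instead avoid projection-valued measures by taking an approximate eigenvector $(w_k)$ for $\lambda_0$ as furnished by Proposition~\ref{prop:app.eval} and approximating it by finite linear combinations from $U$, but propagating the approximation error through the powers $T^n$ is less clean, so I would prefer the spectral-measure route.
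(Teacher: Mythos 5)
Your proof is correct. The paper does not actually give a proof of this proposition---it cites \cite[Theorem 1.2.7]{murphy} together with \cite{backhausz2015ramanujan}---so there is no in-paper argument to compare against. Your route (easy direction via Cauchy--Schwarz and $\|T\|=\rho(T)$; hard direction via the spectral theorem, showing that the spanning set $U$ must charge any open interval that meets $\sigma(T)$, then restricting to even $n$ so that $\lambda^n\ge 0$) is the standard and essentially unavoidable argument. One cosmetic remark: you fix $\varepsilon\in(0,\rho(T))$, which silently assumes $\rho(T)>0$; the degenerate case $\rho(T)=0$ is of course trivial (then $T=0$ and both sides vanish, with $\langle T^{2m}v,v\rangle=\|T^m v\|^2\ge 0$ giving $r\ge 0$), but it is worth a line if you want a watertight write-up.
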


\subsubsection{Markov operators} \label{subsec:markov_operators}

Let $G$ be a connected bounded degree graph. In a slight abuse of notation, we denote by $\ell^2 (G, \deg)$ the Hilbert space of maps $f \colon V(G) \to \mathbb{C}$ with inner product  $$
\langle f, g\rangle = \sum_{u\in V(G)} f(u) \overline{g(u)} \deg (u).
$$
The \emph{Markov operator of $G$} is the self-adjoint operator on $\ell^2 (G,\deg)$ defined by $$
(M_G f) (u) = \frac{1}{\deg (u)} \sum_{(u,v) \in E(G)} f(v),
$$
for every $f \in \ell^2 (G,\deg)$.

Beurling's spectral radius formula establishes a connection between the spectral theory of Markov operators of graphs and the return probabilities of the simple random walk on them.

\begin{theorem}[e.g.\ Prop.\ 6.6.\ in \cite{LP}]
    Let $G$ be a connected bounded degree graph and let $X_n$ denote the position of the simple random walk after $n$ steps on $G$ with starting vertex $o \in V(G)$. Then $$
    \rho(G) := \limsup_{n\to \infty} \sqrt[n]{\mathbb{P} [X_{n} = o]}
    $$
\end{theorem}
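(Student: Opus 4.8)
The plan is to read this off from Beurling's spectral radius formula (the Proposition stated just above) applied to the Markov operator $M_G$, supplemented by an elementary connectivity argument to remove the dependence on the base vertex. The first step is to express return probabilities as matrix coefficients of $M_G$ in the degree-weighted space $\ell^2(G,\deg)$. Since $M_G$ is exactly the transition operator of the simple random walk, one has $(M_G^n\delta_y)(x)=p_n(x,y):=\PP[X_n=y\mid X_0=x]$; and for the unit vector $e_x:=\delta_x/\sqrt{\deg(x)}$ of $\ell^2(G,\deg)$, a one-line computation with the degree-weighted inner product gives $\langle M_G^n e_x,e_x\rangle=p_n(x,x)\ge 0$. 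The vectors $\{e_x : x\in V(G)\}$ are normalised and their linear span (the finitely supported functions) is dense in $\ell^2(G,\deg)$.

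Feeding this into Beurling's formula, and using that $M_G$ is self-adjoint (so its spectral radius is its operator norm $\|M_G\|_2=\rho(G)$, the relevant case here being $G$ infinite, where $\rho(G)=\|M_G\|_2$ by definition), yields $\rho(G)=\sup_{x\in V(G)}\limsup_{n\to\infty}\sqrt[n]{p_n(x,x)}$. It then remains to see that $\limsup_n\sqrt[n]{p_n(x,x)}$ is independent of $x$. For two vertices $x,y$, connectivity gives a walk of some length $k$ from $x$ to $y$, and reversing it one of length $k$ from $y$ to $x$, so $c:=p_k(x,y)\,p_k(y,x)>0$; concatenating these with a closed walk at $y$ gives $p_{n+2k}(x,x)\ge c\,p_n(y,y)$ for every $n$, and taking $(n+2k)$-th roots as $n\to\infty$ (using $c^{1/(n+2k)}\to 1$, $(n+2k)/n\to 1$, and $p_n(y,y)^{1/n}\le 1$) gives $\limsup_m\sqrt[m]{p_m(x,x)}\ge\limsup_n\sqrt[n]{p_n(y,y)}$; symmetry gives equality. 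Taking $x=o$ the root then yields $\rho(G)=\limsup_n\sqrt[n]{p_n(o,o)}=\limsup_n\sqrt[n]{\PP[X_n=o]}$.

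I do not expect a real obstacle here: the statement is essentially a repackaging of Beurling's formula, which is assumed. The only points needing care are bookkeeping ones — keeping track of the degree weights so that $M_G$ is genuinely self-adjoint and $\langle M_G^n e_x,e_x\rangle$ is exactly $p_n(x,x)$ rather than off by a factor of $\deg(x)$ — and the routine $\limsup$ manipulation in the connectivity step, namely that shifting the index by a bounded amount and multiplying by a fixed positive constant leaves $\limsup_n p_n(x,x)^{1/n}$ unchanged. One should also recall that Beurling's formula requires the chosen unit vectors to have dense span, which is precisely why the indicator functions $e_x$ are the natural choice.
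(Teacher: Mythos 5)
The paper offers no proof of this statement; it is imported wholesale as Proposition 6.6 of Lyons--Peres. Your argument is correct and is exactly the textbook route: identify $\langle M_G^n e_o, e_o\rangle$ with $p_n(o,o)$ in the degree-weighted inner product (and note these are nonnegative, sidestepping the sign issue hidden in the way Beurling's formula is stated), observe that the normalized point masses have dense span, invoke the formula to get $\rho(G)=\sup_x\limsup_n p_n(x,x)^{1/n}$, and then use the Chapman--Kolmogorov concatenation $p_{n+2k}(x,x)\ge p_k(x,y)\,p_n(y,y)\,p_k(y,x)$ together with connectivity to make the $\limsup$ basepoint-independent. The degree-weight bookkeeping and the $\limsup$ manipulation are both handled correctly, so there is nothing to add.
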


If $(G,o)$ is a unimodular random graph, then the spectral radius $\rho(G,o) := \rho(M_G)$ becomes a real-valued rerooting invariant random variable. Thus, when $(G,o)$ is ergodic $\rho (G,o)$ becomes essentially a constant, which we call the \emph{spectral radius of} $(G,o)$.

Let $(\cal G,X,\mu)$ be a graphing. We let $\mu^\ast$ denote the \emph{degree-biased version} of $\mu$, defined by $$
\int_X f d\mu^\ast = \frac{\int_{X} f \deg d\mu}{\int_{X} \deg d\mu},
$$
for every Borel map $f\colon X \to \RR_{\geq 0}$.
 
In this case we donete the Markov operator by calligraphic $\cal M$, and note that it is again a self-adjoint operator, this time on $L^2 (X,\mu^\ast)$.
The \emph{spectral radius} of $\cal G$ is $\rho (\mathcal G) := \rho (\cal M|_{L_0^2 (X,\mu^\ast)})$, where $L_0^2 (X,\mu^\ast)$ is the closed subspace of $L^2 (X,\mu^\ast)$ consisting of random variables orthogonal to constants, and $\cal M|_{L_0^2 (X,\mu^\ast)}$ is the restriction of $\mathcal M$ to this $\cal M$-invariant subspace.

\subsubsection{Ramanujan graphs and graphings}\label{subsubsec:ramanujan_graphs_and_graphings}

Here we recall the theorems on Ramanujan graphs mentioned in the introduction.

\begin{theorem}[Alon-Boppana] \label{thm:alon_boppana}
   Let $(G_n)$ be a sequence of $d$-regular graphs with $|V(G_n)| \to \infty$. Then
  \[\liminf \rho(G_n) \geq \rho(T_d) = \frac{2\sqrt{d-1}}{d}.\]
\end{theorem}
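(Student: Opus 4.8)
The plan is to lower-bound $\rho(G_n)$ by counting closed walks and comparing $G_n$ with its universal cover, the $d$-regular tree $T_d$. First I would dispose of the disconnected case: if $G_n$ has at least two connected components then $1$ is an eigenvalue of its Markov operator with multiplicity $\geq 2$, so $\rho(G_n)=1\geq\rho(T_d)$ and such terms are harmless for the $\liminf$. Hence assume each $G_n$ is connected, write $N_n=|V(G_n)|$, list the eigenvalues of $M_{G_n}$ as $1=\lambda_1\geq\lambda_2\geq\dots\geq\lambda_{N_n}$, so that $\rho(G_n)$ is the largest of $|\lambda_2|,\dots,|\lambda_{N_n}|$.

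The key geometric input is a covering inequality. Fixing $v\in V(G_n)$ and a lift $\tilde v\in V(T_d)$ of $v$ under a covering map $T_d\to G_n$, unique path lifting shows that post-composing a walk with the covering map is an injection from the set of closed walks of length $\ell$ based at $\tilde v$ in $T_d$ into the set of closed walks of length $\ell$ based at $v$ in $G_n$; by vertex-transitivity of $T_d$ the number of the former is independent of the basepoint. Since in a $d$-regular graph the diagonal entry $(M^{\ell})_{vv}$ equals $d^{-\ell}$ times the number of length-$\ell$ closed walks at $v$, this gives $(M_{G_n}^{\ell})_{vv}\geq (M_{T_d}^{\ell})_{oo}$ for every $v$ and every $\ell$, where $o$ is the root of $T_d$.

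Next I would take $\ell=2m$ even and sum over $v$. Writing the trace as the sum of the eigenvalues,
\[
N_n\,(M_{T_d}^{2m})_{oo}\leq \operatorname{tr}\!\big(M_{G_n}^{2m}\big)=\sum_{i=1}^{N_n}\lambda_i^{2m}\leq 1+(N_n-1)\,\rho(G_n)^{2m},
\]
where the last step uses $\lambda_1^{2m}=1$, $\lambda_i^{2m}\geq 0$, and $|\lambda_i|\leq\rho(G_n)$ for $i\geq 2$. Rearranging and letting $n\to\infty$ (so $N_n\to\infty$) with $m$ fixed gives $\liminf_n\rho(G_n)^{2m}\geq (M_{T_d}^{2m})_{oo}$, i.e.\ $\liminf_n\rho(G_n)\geq (M_{T_d}^{2m})_{oo}^{1/2m}$ for every $m$. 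Finally I would send $m\to\infty$: by the return-probability formula for the spectral radius recalled above (equivalently, Beurling's spectral radius formula applied to $M_{T_d}$ and the spanning family of normalised indicator functions, using vertex-transitivity), one has $\limsup_m (M_{T_d}^{2m})_{oo}^{1/2m}=\rho(M_{T_d})=\rho(T_d)=2\sqrt{d-1}/d$, the last equality being Kesten's computation. Combining, $\liminf_n\rho(G_n)\geq\sup_m (M_{T_d}^{2m})_{oo}^{1/2m}\geq\rho(T_d)$, as claimed.

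The step I expect to carry the real content is the covering inequality together with the observation that it already yields the \emph{sharp} constant: reducing the local closed-walk count in $G_n$ to the exact count on $T_d$ is precisely what makes the estimate insensitive to whether any neighbourhood of $G_n$ is literally tree-like (which in general it is not). A more hands-on alternative — exhibiting two nonnegative test functions supported on two balls that are far apart, which exist because $N_n\to\infty$ forces the diameter to infinity, whose Rayleigh quotients approach $\rho(T_d)$, and then invoking the min-max principle over their span to bound $\lambda_2$ directly — is softer when those balls happen to be trees, but needs Nilli-type combinatorial care to reach the sharp constant in general; I would only resort to it if one insists on controlling $\lambda_2$ itself rather than $\max_{i\geq 2}|\lambda_i|$.
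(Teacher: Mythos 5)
The paper states Theorem~\ref{thm:alon_boppana} as a classical result without supplying a proof, so there is no in-paper argument to compare against. Your proposal is a correct rendition of the standard trace-method proof (often attributed to Serre): the universal-cover injection of closed walks gives the pointwise diagonal bound $(M_{G_n}^{\ell})_{vv}\geq (M_{T_d}^{\ell})_{oo}$, the trace comparison handles the vanishing weight of the trivial eigenvalue as $N_n\to\infty$, and the return-probability characterization of the spectral radius of $T_d$ (Proposition~2.6 in the paper, together with Kesten's computation $\rho(T_d)=2\sqrt{d-1}/d$) closes the argument. One small remark: the disposal of the disconnected case is fine as stated, but it is also subsumed by the main argument, since the trace bound never uses connectedness; you only need it if you insist on reading $\rho(G_n)$ as the second-largest eigenvalue of a single irreducible chain. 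Everything else, including the injectivity via unique path lifting and the monotone-in-$m$ passage to the supremum, is sound.
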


A finite graph $G$ is said to be Ramanujan, if it satisfies the reverse inequality, that is, $\rho(G) \leq \rho(T_d)$. By a deep theorem of Friedman, random $d$-regular graphs are almost Ramanujan, they satisfy the reverse inequality with arbitrarily small error.

 \begin{theorem}[Friedman \cite{friedman2008proof},\cite{bordenave2020new}] \label{thm:friedman}
    Let $G_n$ be a $d$-regular graph on $n$ vertices, chosen uniformly randomly (assuming that $nd$ is even). Then
\[\forall \varepsilon > 0, \quad \mathbb{P}\left[ \rho(G_n)\leq \frac{2\sqrt{d-1}}{d} + \varepsilon \right] \to 1, \ \mathrm{\it as } \  n \to \infty.\]
\end{theorem}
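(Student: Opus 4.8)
Theorem~\ref{thm:friedman} is a recalled result whose complete proof is the subject of an entire monograph, so I will only describe the strategy, in the streamlined form due to Bordenave \cite{bordenave2020new}. The backbone is the trace (moment) method: to push the non-trivial spectrum of the Markov operator $M = M_{G_n}$ below $\rho(T_d) + \varepsilon$ it suffices to bound a high power of a suitable operator and then take a root. Working with $M$ directly does not work: $\mathbb{E}\operatorname{Tr}(M^{2k}) = \sum_i \mathbb{E}\lambda_i^{2k}$ is dominated by the trivial eigenvalue $1$, and even after subtracting it one must take $k$ of order $\log n$ to separate $\rho(G_n)$ from $2\sqrt{d-1}/d$; at that depth the closed walks on $G_n$ that run through short cycles contribute more than the tree-like walks, and controlling them is the technical heart of the matter.

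Concretely I would proceed as follows. (i) Sample $G_n$ in the configuration model, pairing $nd$ half-edges uniformly; since conditioning on simplicity changes probabilities by only a bounded factor, it is enough to prove the bound there. (ii) Replace $M$ by the non-backtracking operator $B$ on oriented edges, $(Bf)(u,v) = \sum_{w\sim v,\, w\neq u} f(v,w)$. By the Ihara--Bass formula in the regular case, the spectrum of $B$ consists of $\pm 1$ together with the roots of $x^2 - d\lambda\, x + (d-1) = 0$ as $\lambda$ ranges over the spectrum of $M$; a $\lambda$ with $2\sqrt{d-1}/d < |\lambda| < 1$ therefore forces a real eigenvalue of $B$ of modulus $> \sqrt{d-1}$. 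So it suffices to show that, with high probability, every eigenvalue of $B$ other than the Perron value $d-1$ has modulus at most $\sqrt{d-1} + \varepsilon$. (iii) Establish that $G_n$ is \emph{tangle-free} at scale $\ell \asymp \log_{d-1} n$, i.e.\ with high probability no ball of radius $\ell$ contains two independent cycles. (iv) On the tangle-free event, expand $\operatorname{Tr}\!\big(B^{k}(B^{*})^{k}\big)$ for $k \asymp \log n$ as a sum over pairs of non-backtracking walks, rewrite everything in terms of the edge-occupancy variables of the configuration model and center them; the tangle-free hypothesis bounds how often each walk revisits an edge, so the surviving terms are counted by decorated trees with $O(1)$ extra edges, and one obtains $\mathbb{E}\big[\operatorname{Tr}(B^{k}(B^{*})^{k})\,\mathbf{1}_{\mathrm{tf}}\big] \le (d-1)^{k}(\log n)^{O(1)}$ after removing the rank-$O(1)$ Perron contribution. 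Taking the $2k$-th root and using $k\asymp\log n$ gives $\|B|_{\mathrm{Perron}^{\perp}}\| \le \sqrt{d-1}(1+o(1))$ in probability. (v) Translate back through Ihara--Bass, deal with the finitely many largest eigenvalues and with bipartite-type degeneracies (absent for random $d$-regular graphs with high probability) by contiguity arguments, and conclude by a union bound over the boundedly many exceptional events.

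The main obstacle is step (iv): choosing the right notion of walk ``shape'', quantifying the gain from centering (each centered edge-variable should contribute a factor $1/n$), and carrying the trace estimate out to depth $k \asymp \log n$ while keeping the prefactor genuinely subpolynomial in $n$ --- this is precisely where Friedman's original proof ran to a hundred pages and where even the modern arguments require substantial work. Secondary but non-trivial points are the passage between the configuration model and the uniform simple model, and the handling of the eigenvalue near $-1$; both are dealt with by standard coupling and contiguity arguments and do not affect the shape of the proof.
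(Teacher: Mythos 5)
The paper does not prove Theorem~\ref{thm:friedman}; it is recalled verbatim from Friedman's monograph and Bordenave's shorter argument, with no proof given. You correctly identify it as a cited external result and give a faithful high-level sketch of Bordenave's non-backtracking trace method (configuration model, Ihara--Bass reduction, tangle-freeness at scale $\log_{d-1} n$, centered path counting at depth $k \asymp \log n$, and the passage back to the Markov operator). The sketch is accurate, and you rightly flag step~(iv) as the genuine technical core; since the paper offers nothing to compare against here, no further review is warranted.
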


In the measurable setting the analog of Friedman's theorem is the following.

\begin{theorem}[Backhusz, Szegedy, Virág \cite{backhausz2015ramanujan}] \label{thm:BSZV}
    The Bernoulli graphing $\mathcal{B}$ of the $d$-regular tree $T_d$ is Ramanujan, meaning $\rho(\mathcal{B}) = \rho (T_d)$.
\end{theorem}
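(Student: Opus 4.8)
The plan is to establish $\rho(\mathcal B)=\rho(T_d)$ as two inequalities. The lower bound $\rho(T_d)\le\rho(\mathcal B)$ comes essentially for free: the Bernoulli graphing of the infinite graph $T_d$ is aperiodic (its components are almost surely copies of $T_d$), so Theorem~\ref{thm:Alon-Boppana_intro} applies. The reverse bound $\rho(\mathcal B)\le\rho(T_d)$ is the real content, and I would deduce it from Friedman's Theorem~\ref{thm:friedman} together with the local convergence of uniformly random $d$-regular graphs to $T_d$, using Beurling's spectral radius formula as the bridge between the graphing and the finite models. Since $T_d$ is vertex-transitive, the structured subspace $S$ consists only of the constants, so $L^2_0(V(\mathcal B))=R$ and $\rho(\mathcal B)=\rho(\mathcal M|_{L^2_0})$; bounded \emph{local} functions $f$ (those whose value at $(T_d,\xi,o)$ depends only on $\xi|_{B_r(o)}$ for some radius $r$) are total in $L^2(V(\mathcal B))$, so after subtracting means and normalizing they form a set of unit vectors with dense span in $L^2_0$. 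Applying Beurling's formula to $\mathcal M|_{L^2_0}$ along even powers (so that $\langle\mathcal M^{2n}f,f\rangle=\|\mathcal M^{n}f\|^2\ge 0$), it suffices to show $\limsup_{n}\langle\mathcal M^{2n}f,f\rangle^{1/2n}\le\rho(T_d)$ for each bounded local $f$ with $\int f\,d\mu=0$; fix such an $f$ (say $|f|\le K$, depending on $\xi|_{B_r(o)}$, and real, which we may), an integer $n$, and an $\varepsilon>0$.

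The key step is a transfer to finite random graphs. One first observes that $\langle\mathcal M^{2n}f,f\rangle$ is a purely local quantity: on the Bernoulli graphing of $T_d$ one has $\mathcal M^{2n}f(T_d,\xi,o)=\sum_{o'}p_{2n}(o,o')f(T_d,\xi,o')$, which depends only on $\xi|_{B_{2n}(o)}$, so $f\cdot\mathcal M^{2n}f$ is a bounded functional $\Psi$ of the radius-$(r{+}2n)$ labelled ball, and $\langle\mathcal M^{2n}f,f\rangle=\EE[\Psi]$ for the ball of $T_d$ carrying iid uniform labels. Now take $N$ large and let $G_N$ be a uniformly random $d$-regular graph on $N$ vertices: with probability tending to $1$ it satisfies both $\rho(G_N)\le\rho(T_d)+\varepsilon$ (Friedman) and $p_N:=\Prob_v[B_{r+2n}(v)\text{ is not a tree}]\le\delta$ (local convergence), so for any prescribed $\delta>0$ a single such $G_N$ exists once $N$ is large. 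Put iid uniform labels $\zeta$ on $V(G_N)$ and set $F=f(G_N,\zeta,\cdot)\in\ell^2(G_N,\deg)$. At every vertex $v$ whose ball is tree-like, $F(v)(M_{G_N}^{2n}F)(v)$ equals $\Psi$ evaluated at the labelled ball at $v$, and that labelled ball is distributed exactly as in $T_d$; averaging over $v$ and over $\zeta$ therefore gives
\[
\EE_\zeta\Big[\tfrac{1}{Nd}\langle M_{G_N}^{2n}F,F\rangle\Big]\ \ge\ (1-p_N)\,\langle\mathcal M^{2n}f,f\rangle\ -\ p_N K^2 .
\]
On the other hand, decomposing $F=c\mathbf{1}+F_0$ with $F_0\perp\mathbf{1}$ and $c=\frac{1}{N}\sum_v F(v)$, using $M^{2n}\mathbf{1}=\mathbf{1}$, the operator bound $\langle M^{2n}F_0,F_0\rangle\le\rho(G_N)^{2n}\|F_0\|^2$ and $\|F_0\|^2\le\|F\|^2\le NdK^2$, one gets $\frac{1}{Nd}\langle M^{2n}F,F\rangle\le\rho(G_N)^{2n}K^2+c^2$, while $\EE_\zeta[c^2]\to 0$ as $N\to\infty$ by a routine variance estimate (the range of $f$ is bounded and $f(G_N,\zeta,v),f(G_N,\zeta,w)$ are independent once $d(v,w)>2r$). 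Combining the two bounds,
\[
(1-p_N)\,\langle\mathcal M^{2n}f,f\rangle\ \le\ (\rho(T_d)+\varepsilon)^{2n}K^2\ +\ \EE_\zeta[c^2]\ +\ p_N K^2 ,
\]
and letting $N\to\infty$ with $n,\varepsilon$ fixed yields $\langle\mathcal M^{2n}f,f\rangle\le(\rho(T_d)+\varepsilon)^{2n}K^2$. Taking $2n$-th roots, then $n\to\infty$, then $\varepsilon\to 0$, gives $\limsup_n\langle\mathcal M^{2n}f,f\rangle^{1/2n}\le\rho(T_d)$, and Beurling's formula then delivers $\rho(\mathcal B)\le\rho(T_d)$.

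I expect the only real obstacle to be making the transfer step airtight: identifying $f\cdot\mathcal M^{2n}f$ with a ball-functional and checking that it transports verbatim to the tree-like balls of $G_N$; having the variance estimate absorb the nonzero empirical mean $c$ of $F$; and—crucially—that a uniformly random $d$-regular graph simultaneously obeys Friedman's bound and is locally tree-like of radius $r+2n$, so that one graph $G_N$ serves both purposes. Everything after that is bookkeeping of the three nested limits. (Of course, Theorem~\ref{thm:BSZV} is also subsumed by Theorem~\ref{thm:intro.rel.ram}, since for $T_d$ the structured spectrum is $\{1\}$ and $\sigma_R(\mathcal B)=\sigma(\mathcal M|_{L^2_0})$; the argument above is presented to exhibit the direct link with Friedman's theorem.)
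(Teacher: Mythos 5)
Your proposal follows exactly the route the paper sketches in Section~\ref{subsubsec:ramanujan_graphs_and_graphings}: transfer a near-optimal local (block-factor) function from the Bernoulli graphing of $T_d$ to a sequence of uniformly random $d$-regular graphs, exploit local convergence to identify the inner products on tree-like balls, and invoke Friedman's Theorem~\ref{thm:friedman} to bound the resulting Rayleigh quotients. The paper only gives a sketch (phrased as a contradiction with a single witness vector), while you give a direct, airtight version via Beurling's formula on $\mathcal M^{2n}$ together with the variance estimate absorbing the nonzero empirical mean; the underlying mechanism is the same.
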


As a motivation for Section~\ref{sec:fin.to.inf} where we prove Theorem~\ref{thm:qt.spec.eq}, we now sketch an argument deriving Theorem~\ref{thm:BSZV} from Theorem~\ref{thm:friedman}.

\begin{proof}[Sketch of proof]
    Assume towards contradiction that $\rho(\mathcal{B})> \rho (T_d)$. This is witnessed by a unit vector $f  \in L_0^2(V(\mathcal{B}))$, that is, $\langle \mathcal{M}f,f \rangle \geq \rho(T_d)+\varepsilon$. We can assume without loss of generality that $f$ depends only on the random labels within the $r$-ball around the root of $T_d$. 

    Generate a sequence $G_n$ of random $d$-regular graphs. We can mimic $f$ on the $G_n$ by placing iid uniform random labels on the vertices. Denote these labelings $l_n:V(G_n) \to [0,1]$. As the $G_n$ Benjamini-Schramm converge to $T_d$, for $n$ large from most vertices the $r$-ball looks like $B_r(T_d)$. We can thus define a function $f_n:V(G_n) \to \mathbb{R}$ by $f_n(v)= f\big(B_r(G_n,v), v, l_n\big)$. With probability 1 the $f_n$ turn out to be asymptotic witnesses for $\rho(G_n) \geq \rho(T_d) + \varepsilon$, contradicting Theorem~\ref{thm:friedman}. 
\end{proof}

\subsection{The random and structured subspaces} \label{subsec:random_and_structured}

Let $(G,o)$ be a unimodular random graph with law $\nu$ and $(\cal B,\GG,\mu)$ denote its associated Bernoulli graphing. The \emph{sampling map} $s\colon (G,\xi,o) \in \GG^+ \to (G,o)\in \GG$ is such that $s_\ast \mu^\ast = \nu^\ast$ and thus establishes an isometric embedding $\iota  \colon L^2 (\GG,\nu^\ast)\hookrightarrow L^2 (\GG^+,\mu^\ast)$ by letting $\iota f  := f \circ s$ for every $f \in L^2 (\GG, \nu^\ast)$.

\begin{definition}
    The \emph{structured subspace} of $L^2 (\GG^+,\mu^\ast)$ is $S := \ran (\iota)$. The \emph{random subspace} of $L^2 (\GG^+,\mu^\ast)$ is the orthogonal complement $R := S^\perp$.
\end{definition}

We now establish the basic properties of these subspaces, and characterize functions in $R$ as those whose conditional expectation is $0$ given almost every realization of $(G,o)$.

\begin{proposition}\label{prop:proj}
    Let $P$ be the orthogonal projection on $S$. Then, for every $f\in L^2 (\GG^+,\mu^\ast)$,  $$
    (Pf)(G,\xi,o) = \int_{[0,1]^{V(G)}} f(G,\eta,o) d \beta_G (\eta),
    $$
    for $\mu^\ast$-almost every $(G,\xi,o)\in \GG$.
\end{proposition}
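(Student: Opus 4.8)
The claim is that the orthogonal projection $P$ onto the structured subspace $S$ is given by "averaging over the labels", i.e. conditional expectation with respect to the $\sigma$-algebra generated by the sampling map $s$. So the first thing I would do is define the operator $E$ on $L^2(\GG^+,\mu^\ast)$ by $(Ef)(G,\xi,o) := \int_{[0,1]^{V(G)}} f(G,\eta,o)\, d\beta_G(\eta)$, and then verify that $E$ is exactly the orthogonal projection onto $S$. The standard way to do this: show (a) $E$ is well-defined and bounded on $L^2$, (b) $E$ is idempotent, (c) $E$ is self-adjoint, and (d) $\ran(E) = S$. Once all four hold, $E = P$ by uniqueness of orthogonal projections onto a closed subspace.

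For (a), well-definedness, I would note that $(G,\xi,o)\mapsto \int f(G,\eta,o)\,d\beta_G(\eta)$ depends only on $(G,o) = s(G,\xi,o)$, so it factors as $g\circ s$ for a function $g$ on $\GG$; boundedness (indeed $\|Ef\|\le\|f\|$) follows from Jensen's inequality together with the fact that the measure $\mu$ disintegrates over $\nu$ with fibre measures $\beta_G$ — which is precisely the defining formula for $\mu$ in section~\ref{subsubsec:bernoulli_graphing}. Note the degree bias $\mu^\ast$ vs $\mu$ needs a small check: the degree $\deg(G,\xi,o) = \deg_G(o)$ depends only on $(G,o)$, so reweighting by $\deg$ commutes with the fibrewise averaging, and the disintegration of $\mu^\ast$ over $\nu^\ast$ still has fibre measures $\beta_G$. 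For (b), $E^2 = E$ is immediate since averaging a function that is already constant on fibres returns it unchanged. For (c), self-adjointness, I would write out $\langle Ef, h\rangle_{\mu^\ast}$, use the disintegration to turn the outer integral into $\int_\GG \int \int f(G,\eta,o)\,d\beta_G(\eta)\,\overline{h(G,\xi,o)}\,d\beta_G(\xi)\,d\nu^\ast(G,o)$, and observe by Fubini (the fibre is a product space $[0,1]^{V(G)}$, everything nonneg/integrable after splitting into real/imaginary, positive/negative parts) that this is symmetric in $f$ and $h$. For (d): $S = \ran(\iota)$ consists exactly of functions of the form $f\circ s$, and these are precisely the functions fixed by $E$ (a function pulled back from $\GG$ is constant on fibres, hence unchanged by averaging; conversely if $Ef = f$ then $f$ is a.e. equal to $(Ef)$ which factors through $s$), while $\ran(E)\subseteq S$ by the factorization in (a). Combining, $\ran(E) = S$.

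The one genuinely delicate point is the measure-theoretic bookkeeping around the disintegration: one must make sure the formula defining $\mu$ really does give a disintegration of $\mu^\ast$ over $\nu^\ast$ with the fibre measures $\beta_G$, so that fibrewise integration against $\beta_G$ is literally the conditional expectation $\EE[\,\cdot\mid s]$. This requires being a little careful that $\beta_G$ is well-defined on an isomorphism class of rooted graphs (the iid product measure $\Leb^{\otimes V(G)}$ on $[0,1]^{V(G)}$ is canonically attached to the graph $G$ up to the relabelling induced by a graph isomorphism, and is invariant under it, so it descends), and that the map $(G,o)\mapsto \beta_G$ is a Borel family of probability measures. None of this is hard, but it is the part where a reader might want detail. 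Everything else is the routine "conditional expectation = orthogonal projection" argument.

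Alternatively — and this might be cleaner to write — I would skip the four-step verification and instead invoke directly the characterization of conditional expectation: $s$ pushes $\mu^\ast$ to $\nu^\ast$, the displayed formula for $\mu$ exhibits $\beta_G$ as the regular conditional distribution of $(G,\xi,o)$ given $s(G,\xi,o) = (G,o)$, hence fibrewise $\beta_G$-averaging \emph{is} $\EE_{\mu^\ast}[\,\cdot \mid \sigma(s)]$, and it is a standard fact that conditional expectation onto a sub-$\sigma$-algebra coincides with orthogonal projection onto the $L^2$ subspace of $\sigma(s)$-measurable functions, which here is exactly $S = \ran(\iota)$ since $\iota f = f\circ s$ and every $\sigma(s)$-measurable $L^2$ function is of that form. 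I expect the main obstacle, such as it is, to be purely expository: deciding how much of the disintegration formalism to spell out versus cite.
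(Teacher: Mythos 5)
Your proposal is correct, but it takes a different route from the paper. The paper exploits the fact that $\iota$ is an isometry, hence a partial isometry, so $\iota\iota^\ast$ is automatically the orthogonal projection onto $\ran(\iota) = S$ (citing Murphy, Thm.\ 2.3.3). The whole proof then reduces to a single computation of the adjoint: expanding $\langle f, \iota g\rangle$ and using the disintegration formula for $\mu$ over $\nu$ gives $\iota^\ast f(G,o) = \int f(G,\eta,o)\,d\beta_G(\eta)$, and $P = \iota\iota^\ast$ is the claimed formula. Your first route instead defines the averaging operator $E$ from scratch and checks the four defining properties (boundedness, idempotence, self-adjointness, range $= S$), which is more elementary but longer; your alternative route invokes the general ``conditional expectation onto a sub-$\sigma$-algebra is the $L^2$ orthogonal projection'' fact, which is closest in spirit but swaps the operator-theoretic lemma for a probability-theoretic one. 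All three arguments are sound and ultimately rest on the same disintegration of $\mu^\ast$ over $\nu^\ast$ with fibre measures $\beta_G$; the paper's version is just the most compact, since the partial-isometry fact collapses three of your four checks into one citation.
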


\begin{proof}
The map $\iota$ is a partial isometry, so  $\iota\iota^\ast$ is the orthogonal projection on $S$ by \cite[Theorem 2.3.3]{murphy}. For any $g\in L^2 (\GG,\nu^\ast)$ and $f\in L^2 (\GG^+,\mu^\ast)$, we have \begin{align*}
         \langle  f , \iota g \rangle &= \int_{\GG^+} \overline{g(G,o)} f(G,\eta,o) d\mu^\ast (G,\eta,o) \\
         &= \int_{\GG} \overline{g(G,o)} \int_{[0,1]^{V(G)}} f(G,\eta,o) d\beta(\eta) d\nu^\ast(G,o).
    \end{align*}
Therefore, the adjoint $\iota^\ast$ is defined for any $f\in L^2 (\GG^+,\mu^\ast)$ by \begin{equation}\label{eq:iota.star}
     \iota^\ast f (G,o) = \int_{[0,1]^{V(G)}} f(G,\eta,o) d\beta(\eta).
\end{equation}
Then, the orthogonal projection $P$ on $S$ is determined for any $f\in L^2 (\GG^+,\mu^\ast)$ by $$
Pf (G,\xi,o) = \iota\iota^\ast f (G,\xi,o) = \int_{[0,1]^{V(G)}} f(G,\eta,o) d\beta_G (\eta).
$$

\end{proof}

As a corollary of the above proposition, we get the following characterization of the random subspace.

\begin{corollary}\label{cor:random.char}
    Let $f\in L^2 ( \GG^+, \mu^\ast)$. Then $f\in R$ if and only if $$
    \int_{[0,1]^{V(G)}} f(G,\xi,o) d\beta_G (\xi) = 0,
    $$
    for $\nu$-almost every $(G,o)$.
\end{corollary}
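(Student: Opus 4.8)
The plan is to deduce this directly from Proposition \ref{prop:proj}, since the corollary is essentially just the statement that $R = \ker(P)$ rephrased via the explicit formula for $P$. First I would recall that $R = S^\perp$ and that $P$ is the orthogonal projection onto $S$, so for $f \in L^2(\GG^+, \mu^\ast)$ we have $f \in R$ if and only if $Pf = 0$ (as an element of $L^2$, i.e.\ $\mu^\ast$-almost everywhere). By Proposition \ref{prop:proj}, $Pf = 0$ in $L^2(\GG^+,\mu^\ast)$ is equivalent to
\[
    \int_{[0,1]^{V(G)}} f(G,\eta,o)\, d\beta_G(\eta) = 0 \quad \text{for } \mu^\ast\text{-a.e. } (G,\xi,o) \in \GG^+.
\]

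The only genuine point to check is that this last condition is equivalent to the same integral vanishing for $\nu$-almost every $(G,o)$ — i.e.\ passing from the $\mu^\ast$-a.e.\ statement on $\GG^+$ to a $\nu^\ast$-a.e.\ (equivalently $\nu$-a.e., since $\nu$ and $\nu^\ast$ are mutually absolutely continuous) statement on $\GG$. This is immediate because the quantity $\int_{[0,1]^{V(G)}} f(G,\eta,o)\, d\beta_G(\eta)$ depends only on the isomorphism class $(G,o) = s(G,\xi,o)$, not on $\xi$; in other words it equals $(\iota^\ast f)(G,o)$ from \eqref{eq:iota.star}, which is a well-defined element of $L^2(\GG,\nu^\ast)$. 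Since $s_\ast \mu^\ast = \nu^\ast$, a function pulled back along $s$ vanishes $\mu^\ast$-a.e.\ if and only if it vanishes $\nu^\ast$-a.e., and this in turn is equivalent to vanishing $\nu$-a.e.

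I do not anticipate any real obstacle here; the statement is a routine corollary and the main thing to be careful about is the bookkeeping between the measures $\mu^\ast$, $\nu^\ast$, and $\nu$ on the two spaces $\GG^+$ and $\GG$, together with the observation that the inner integral is genuinely a function of $(G,o)$ alone so that ``for $\mu^\ast$-a.e.\ $(G,\xi,o)$'' collapses to ``for $\nu$-a.e.\ $(G,o)$''.
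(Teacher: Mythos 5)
Your proposal is correct and is exactly the (left-implicit) argument the paper intends: the corollary is stated immediately after Proposition~\ref{prop:proj} with no separate proof, and the intended reasoning is precisely $f \in R \Leftrightarrow Pf = 0 \Leftrightarrow \iota^\ast f = 0$, using the formula for $P = \iota\iota^\ast$ and the fact that the inner integral is a function of $(G,o)$ alone. Your spelled-out bookkeeping between $\mu^\ast$, $\nu^\ast$, and $\nu$ is the right thing to check; the only point worth a footnote is that passing from $\nu^\ast$-a.e.\ to $\nu$-a.e.\ uses $\deg(o) > 0$ $\nu$-a.s.\ (no isolated vertices), which is implicit throughout the paper.
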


Recall that a \emph{block factor of radius} $r\in \NN$ is an element $f \in L^2 (\GG^+,\mu^\ast)$ such that for any two $(G,\xi,o),(H,\eta,p)\in \GG^+$, $$
(B_G (o,r) , \xi|_{B_G (o,r)},o) \cong(B_H (p,r) , \xi|_{B_H (p,r)},p) \Rightarrow f (G,\xi,o) = f(H,\eta,p)
$$
I.e., a block factor is a random variable on $\GG^+$ whose values are determined only by the labels on a ball of finite radius around the root. 

\begin{corollary}\label{cor:proj.block}
    If $f\in L^2 (\GG^+,\mu^\ast)$ is a block factor, so are $Pf$ and $(1-P)f$
\end{corollary}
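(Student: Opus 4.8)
The plan is to use the explicit formula for $P$ from Proposition~\ref{prop:proj} and observe that averaging over the labels does not enlarge the region of dependence. Concretely, suppose $f$ is a block factor of radius $r$, and take two rooted labelled graphs $(G,\xi,o)$ and $(H,\eta,p)$ with $(B_G(o,r),\xi|_{B_G(o,r)},o) \cong (B_H(p,r),\eta|_{B_H(p,r)},p)$. I want to show $(Pf)(G,\xi,o) = (Pf)(H,\eta,p)$. By Proposition~\ref{prop:proj}, $(Pf)(G,\xi,o) = \int_{[0,1]^{V(G)}} f(G,\zeta,o)\, d\beta_G(\zeta)$, and similarly for $(H,\eta,p)$. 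Since $f$ has radius $r$, the integrand $f(G,\zeta,o)$ depends only on $\zeta|_{B_G(o,r)}$; hence the integral over $[0,1]^{V(G)}$ collapses to an integral over $[0,1]^{B_G(o,r)}$ against the product Lebesgue measure there. The same collapse happens on the $H$ side, and the graph isomorphism $B_G(o,r) \to B_H(p,r)$ furnished by the hypothesis transports one integral to the other, since it is measure-preserving for the product Lebesgue measures and intertwines the two integrands. Therefore $Pf$ is again a block factor of radius $r$. Since $(1-P)f = f - Pf$ is a difference of two radius-$r$ block factors, it is one as well.

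The first technical point to nail down is that the formula in Proposition~\ref{prop:proj} holds \emph{pointwise} for the specific function $Pf$ we construct, not merely $\mu^\ast$-almost everywhere: a block factor is genuinely a function on isomorphism classes, so I should simply \emph{define} $\widetilde{Pf}(G,\xi,o) := \int_{[0,1]^{B_G(o,r)}} f(G,\zeta,o)\, d\beta(\zeta)$ (a well-defined radius-$r$ block factor by the isomorphism-invariance argument above), and then note that $\widetilde{Pf} = Pf$ in $L^2(\GG^+,\mu^\ast)$ because they agree $\mu^\ast$-a.e.\ by Proposition~\ref{prop:proj}. Since the class of block factors of radius $r$ is closed under $\mu^\ast$-a.e.\ modification within itself — or rather, since we only care about the $L^2$ equivalence class — this identifies $Pf$ with a block factor. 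I would phrase the statement as: $Pf$ and $(1-P)f$ have representatives that are block factors of radius $r$.

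The second point, which is really the only place any care is needed, is the measure-theoretic bookkeeping in ``the integral over $[0,1]^{V(G)}$ collapses to an integral over $[0,1]^{B_G(o,r)}$.'' This is Fubini together with the fact that $\beta_G = \Leb^{\otimes V(G)}$ is a product measure and the ball $B_G(o,r)$ is finite (degrees are bounded by $D$), so the marginal on the $B_G(o,r)$-coordinates is exactly $\Leb^{\otimes B_G(o,r)}$, and integrating out the remaining coordinates against a function that ignores them leaves the function unchanged. I do not expect any genuine obstacle here; the main thing to be careful about is simply keeping the two graphs' vertex sets distinct in notation and invoking the provided isomorphism on $B_G(o,r)$ explicitly to move between the two reduced integrals. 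The whole proof should be three or four lines once these conventions are fixed.
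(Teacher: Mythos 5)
Your proposal is correct and uses the same key idea as the paper: apply the explicit formula from Proposition~\ref{prop:proj} so that $Pf$ is the label-average of $f$, observe that for a block factor of radius $r$ this average depends only on the (unlabelled) $r$-ball, and handle $(1-P)f$ as a difference of block factors. The paper's proof is a one-liner (``$Pf$ does not depend on the labels''), leaving the collapse to the finite ball and the $\mu^\ast$-a.e.\ identification implicit, whereas you spell these out carefully — but the route is the same.
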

    \begin{proof}
      By Proposition \ref{prop:proj}, the random variable $Pf$ does not depend on the labels.
    \end{proof}

We conclude this section observing that the decomposition of $L^2 (\GG^+,\mu^\ast)$ into the random and structured subspaces is invariant under the Markov operator.

    \begin{corollary}\label{cor:proj.markov}
 Let $\cal M$ denote the Markov operator on $L^2 (\GG^+,\mu^\ast)$. Then $P\cal M = \cal M P$.
    \end{corollary}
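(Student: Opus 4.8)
The plan is to show that the structured subspace $S$ is invariant under $\mathcal M$. Since $\mathcal M$ is self-adjoint, this automatically forces $R=S^{\perp}$ to be invariant as well, and these two invariances together are precisely the assertion $P\mathcal M=\mathcal M P$.

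For the invariance of $S$ I would return to the definition of the Bernoulli graph: the $\mathcal B$-neighbours of a vertex $(G,\xi,o)$ are exactly the classes $(G,\xi,o')$ with $(o,o')\in E(G)$, so that passing to a neighbour leaves both the underlying graph and the labelling untouched and only moves the root. Hence for $f=\iota g\in S$, where $f(G,\xi,o)=g(G,o)$ for $\mu^{\ast}$-almost every $(G,\xi,o)$, one has
$$
(\mathcal M f)(G,\xi,o)=\frac{1}{\deg(o)}\sum_{(o,o')\in E(G)}f(G,\xi,o')=\frac{1}{\deg(o)}\sum_{(o,o')\in E(G)}g(G,o'),
$$
and the right-hand side depends only on the rooted isomorphism class $(G,o)$; call this function $h$. (This $h$ is exactly the Markov operator of the skeleton applied to $g$, anticipating Section~\ref{subsec:structured_spectrum}.) Since $\mathcal M$ is bounded, $\mathcal M f=h\circ s\in L^{2}(\GG^{+},\mu^{\ast})$, and as $s_{\ast}\mu^{\ast}=\nu^{\ast}$ this forces $h\in L^{2}(\GG,\nu^{\ast})$; therefore $\mathcal M f=\iota h\in \ran(\iota)=S$, i.e.\ $\mathcal M S\subseteq S$.

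To conclude, take $f\in R$ and any $g\in S$: then $\langle \mathcal M f,g\rangle=\langle f,\mathcal M g\rangle=0$ because $\mathcal M g\in S$, so $\mathcal M R\subseteq R$. Writing an arbitrary $f\in L^{2}(\GG^{+},\mu^{\ast})$ as $f=Pf+(1-P)f$ with $Pf\in S$ and $(1-P)f\in R$ and applying $\mathcal M$, the two summands map into $S$ and $R$ respectively, whence $P\mathcal M f=\mathcal M Pf$.

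There is no serious obstacle. The only point worth a sentence is that $\ran(\iota)$ is a closed subspace — being the isometric image of a complete space — so that an $L^{2}$-function which agrees $\mu^{\ast}$-a.e.\ with a pullback along $s$ genuinely belongs to $S$. Alternatively, one can bypass invariance and verify $P\mathcal M f=\mathcal M Pf$ directly by inserting the explicit formula for $P$ from Proposition~\ref{prop:proj} and applying Fubini's theorem to the finite neighbour-sum; this just records the fact that integrating out the labels against $\beta_{G}$ commutes with shifting the root, since $\beta_{G}$ does not depend on the root.
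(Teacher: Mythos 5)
Your proposal is correct, and it takes a genuinely different route from the paper. The paper verifies $P\mathcal{M}f=\mathcal{M}Pf$ by a one-line direct computation: it expands $P\mathcal{M}f$ using the explicit formula for $P$ from Proposition~\ref{prop:proj}, pulls the finite neighbour-sum out of the integral over $\beta_G$, and recognizes the result as $\mathcal{M}Pf$ — essentially the "alternative" you mention in your closing sentence. Your main argument is instead a subspace-invariance argument: you show $\mathcal{M}S\subseteq S$ from the observation that the Bernoulli edges fix the labelling and only move the root (so $\mathcal{M}\iota g$ is again a pullback along $s$, and in fact equals $\iota\mathcal{N}g$, anticipating the skeleton operator), then invoke self-adjointness of $\mathcal{M}$ to get $\mathcal{M}R\subseteq R$, and deduce $P\mathcal{M}=\mathcal{M}P$ from the decomposition $L^2 = S\oplus R$. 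Your route is somewhat more conceptual and directly explains \emph{why} the commutation holds (the structured subspace is manifestly preserved by rerooting), at the cost of a slightly longer argument; the paper's route is shorter and more mechanical, hiding the invariance inside the Fubini swap. Both are sound, and your remark that $\ran(\iota)$ is closed (being the isometric image of a Hilbert space) correctly disposes of the only delicate point in your version.
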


    \begin{proof}
        We argue by direct computation on a random variable $f\in L^2 (\GG^+,\mu^\ast)$. For almost every $(G,\xi,o) \in \GG^+$,
        \begin{align*}
            P\cal M f (G,\xi,o) &= \int_{[0,1]^{V(G)}}\frac{1}{\deg(o)} \sum_{(o',o)\in E(G)} f (G,\eta,o') d\beta_G(\eta)\\
            &= \frac{1}{\deg(o)} \sum_{(o',o)\in E(G)} \int_{[0,1]^{V(G)}} f(G,\eta,o')d\beta_G(\eta)\\
             &= \cal M P f (G,\xi,o).
        \end{align*}
    \end{proof}

\subsection{The structured spectrum} \label{subsec:structured_spectrum}

After Corollary \ref{cor:proj.markov}, we obtain a decomposition of the Bernoulli spectrum $\sigma (\mathcal B) := \sigma (\mathcal M)$ as a union $\sigma (\mathcal M|_S) \cup \sigma (\mathcal M|_R)$. We refer to $\sigma_S (\mathcal B) := \sigma (\mathcal M|_S)$ as the \emph{structured spectrum} and to $\sigma_R (\mathcal B) := \sigma (\mathcal M|_R)$ as the \emph{random spectrum}. In this section, we show that the structured spectrum is determined by a Markov chain on $\GG$.

\begin{definition}
    The \emph{skeleton Markov chain} $\cal S$ is defined on $\GG$ by $$
    p_{\cal S}((G,u),(H,v)) = \frac{|\{w\in N_G (u) : (G,w) \cong (H,v)\}|}{\deg_G (u)}.
    $$ 
\end{definition}

Degree-biased versions of unimodular random graph laws are stationary measures for the skeleton.

\begin{proposition}
    Let $(G,o)$ be a unimodular random graph with law $\nu$. Then $\nu^\ast$ is a stationary measure for $\cal S$.
\end{proposition}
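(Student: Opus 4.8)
The plan is to verify stationarity directly from the definition by testing against an arbitrary non-negative Borel function on $\GG$ and reducing everything to the Mass Transport Principle for $(G,o)$. Concretely, I would fix a Borel $f\colon\GG\to\RR_{\geq 0}$ and compute $\int_{\GG}\big(\sum_{(H,v)}p_{\cal S}((G,o),(H,v))\,f(H,v)\big)\,d\nu^\ast(G,o)$, which by the definition of $p_{\cal S}$ and the fact that $(G,w)$ ranges over rerootings of $G$ as $w$ ranges over $N_G(o)$, equals $\int_{\GG}\frac{1}{\deg_G(o)}\sum_{w\in N_G(o)}f(G,w)\,d\nu^\ast(G,o)$. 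Writing out the degree-biasing, this is $\frac{1}{\int_{\GG}\deg\,d\nu}\int_{\GG}\sum_{w\in N_G(o)}f(G,w)\,d\nu(G,o)$. The goal is to see that this equals $\int_{\GG}f\,d\nu^\ast=\frac{1}{\int_{\GG}\deg\,d\nu}\int_{\GG}\deg_G(o)f(G,o)\,d\nu(G,o)$.

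The key step is therefore the identity
\[
\int_{\GG}\sum_{w\in N_G(o)}f(G,w)\,d\nu(G,o)=\int_{\GG}\deg_G(o)\,f(G,o)\,d\nu(G,o),
\]
which is exactly an instance of the Mass Transport Principle: apply the MTP to the Borel function $\Phi\colon\GGG\to\RR_{\geq 0}$ defined by $\Phi(G,x,y)=f(G,y)\cdot\one[y\sim x]$. Then $\sum_{o'\in V(G)}\Phi(G,o,o')=\sum_{o'\sim o}f(G,o')=\sum_{w\in N_G(o)}f(G,w)$, while $\sum_{o'\in V(G)}\Phi(G,o',o)=\sum_{o'\sim o}f(G,o)=\deg_G(o)f(G,o)$. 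Equating the two integrals via the MTP gives precisely the displayed identity, and dividing by the common normalizing constant $\int_{\GG}\deg\,d\nu$ finishes the proof.

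I do not anticipate a serious obstacle here; the only points requiring a little care are (i) checking that $\Phi$ is genuinely Borel on $\GGG$ (immediate, since adjacency and $f$ pulled back along the forgetful map $\GGG\to\GG$ are Borel) and (ii) making sure the interchange of sum and integral is justified, which is automatic by Tonelli as everything is non-negative. One should also note in passing that the sum over $(H,v)$ defining the Markov step is really a finite sum over the (at most $\deg_G(o)$) isomorphism types of rooted graphs obtained by rerooting $G$ at a neighbor of $o$, so no measurability subtlety arises on that side either. Thus the statement is essentially a repackaging of the Mass Transport Principle together with the definition of the degree-biased measure.
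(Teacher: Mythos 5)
Your proof is correct and takes essentially the same approach as the paper: both reduce stationarity to a single application of the Mass Transport Principle, unfolding the degree-biased measure $\nu^\ast$. The only cosmetic difference is that you test against an arbitrary non-negative Borel function $f$ and make the transport function $\Phi(G,x,y)=f(G,y)\one[y\sim x]$ explicit, whereas the paper tests against indicators of events $U\subset\GG$; these are interchangeable.
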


\begin{proof}
    For any event $U\subset \GG$, by the Mass Transport Principle
    \begin{align*}
    &\int_{\GG} p((G,o), U) d\nu^\ast (G,o)\\
    &=\frac{1}{\EE_\nu [\deg_G (o)]}\int_{\GG} \deg_G (o,U) d\nu (G,o)\\
    &= \frac{1}{\EE_\nu [\deg_G (o)]}\int_{U} \deg_G (o) d\nu (G,o)\\
    &= \nu^\ast (U).
    \end{align*}
\end{proof}

In the next proposition $\cal N$ denotes the Markov operator of $\cal S$ on $L^2 (\GG,\nu^\ast)$.

\begin{proposition}
For a unimodular random graph $(G,o)$ we have $\mathcal N = \iota^\ast \mathcal M \iota$. In particular $\sigma_S (\mathcal B) = \sigma (\mathcal N)$. 
\end{proposition}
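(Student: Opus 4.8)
The plan is to compute $\iota^\ast \mathcal M \iota$ directly and recognize it as $\mathcal N$, then invoke functional calculus (or just the general fact that isometrically conjugate operators have equal spectra) to conclude the spectral equality. First I would recall the two formulas already established: the adjoint $\iota^\ast$ acts by integrating out the labels, $\iota^\ast f(G,o) = \int_{[0,1]^{V(G)}} f(G,\eta,o)\, d\beta_G(\eta)$ from equation~\eqref{eq:iota.star}, and $\iota g = g \circ s$, i.e.\ $(\iota g)(G,\xi,o) = g(G,o)$. Then for $g \in L^2(\GG,\nu^\ast)$ I would unwind $\iota^\ast \mathcal M \iota g$: applying $\mathcal M$ to $\iota g$ gives $(\mathcal M \iota g)(G,\xi,o) = \frac{1}{\deg_G(o)} \sum_{(o',o)\in E(G)} g(G,o')$, which already does not depend on $\xi$; integrating against $\beta_G$ leaves it unchanged, so $(\iota^\ast \mathcal M \iota g)(G,o) = \frac{1}{\deg_G(o)} \sum_{o' \in N_G(o)} g(G,o')$. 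Grouping the neighbours $o'$ by the isomorphism class of $(G,o')$ turns this sum into $\sum_{(H,v)} p_{\cal S}\big((G,o),(H,v)\big)\, g(H,v)$, which is exactly $(\mathcal N g)(G,o)$ by the definition of the skeleton Markov chain. Hence $\mathcal N = \iota^\ast \mathcal M \iota$.

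For the ``in particular'' clause, I would argue as follows. Since $\iota$ is a partial isometry with range $S$, we have $\iota^\ast \iota = \id_{L^2(\GG,\nu^\ast)}$ and $\iota\iota^\ast = P$, the projection onto $S$ (this was already used in the proof of Proposition~\ref{prop:proj}). The operator $\mathcal M|_S$ is unitarily equivalent to $\mathcal N$ via $\iota$: indeed $\iota^\ast (\mathcal M|_S) \iota = \iota^\ast \mathcal M \iota = \mathcal N$ and conversely $\iota \mathcal N \iota^\ast = \iota \iota^\ast \mathcal M \iota \iota^\ast = P \mathcal M P = \mathcal M P = \mathcal M|_S$ on $S$, where the last equalities use Corollary~\ref{cor:proj.markov}. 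Unitarily equivalent (more precisely, conjugate by a surjective isometry onto the invariant subspace) bounded operators have the same spectrum, so $\sigma_S(\mathcal B) = \sigma(\mathcal M|_S) = \sigma(\mathcal N)$.

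I do not expect any serious obstacle here; the statement is essentially bookkeeping with the two formulas for $\iota$ and $\iota^\ast$. The one point that deserves a careful word is the passage from the neighbour-sum to the skeleton transition kernel: one must note that the isomorphism class $(G,o')$ of a neighbour is a measurable function on $\GGG$ and that the fibre $\{w \in N_G(o): (G,w)\cong (H,v)\}$ has exactly the cardinality appearing in the numerator of $p_{\cal S}$, so that $(\mathcal N g)(G,o) = \frac{1}{\deg_G(o)}\sum_{w\in N_G(o)} g(G,w)$ as an honest identity of $L^2(\GG,\nu^\ast)$ functions — this is where the definition of $\mathcal N$ as the Markov operator of $\cal S$ is used, together with stationarity of $\nu^\ast$ from the previous proposition to guarantee $\mathcal N$ is a well-defined self-adjoint contraction on $L^2(\GG,\nu^\ast)$. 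With that noted, the computation $\iota^\ast \mathcal M \iota g = \mathcal N g$ is immediate from plugging in, and the spectral consequence follows from the standard fact about isometric conjugation.
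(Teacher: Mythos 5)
Your proposal is correct and takes essentially the same approach as the paper: the paper's proof is exactly the direct computation of $\iota^\ast \mathcal M \iota f$ via Equation~\eqref{eq:iota.star} and the observation that the $\xi$-integral is trivial. You additionally spell out the ``in particular'' clause (that $\iota$ is a surjective isometry onto the $\mathcal M$-invariant subspace $S$, so $\mathcal M|_S$ and $\mathcal N$ are unitarily equivalent and hence isospectral), which the paper leaves implicit; that filling-in is correct and uses nothing beyond $\iota^\ast\iota = \id$, $\iota\iota^\ast = P$, and Corollary~\ref{cor:proj.markov}.
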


\begin{proof}
 By Equation (\ref{eq:iota.star}), we have that for an arbitrary $f\in L^2 (\GG,\nu^\ast)$, $$
    \iota^\ast \cal M \iota  f (G,o)= \int_{[0,1]^{V(G)}} \frac{1}{\deg(o)} \sum_{(o',o)\in E(G)} f(G,o') d\beta_G(\eta)  = \cal N f (G,o).
    $$
\end{proof}

\section{Initial results}\label{sec:warmup}

In this section, we prove the results that serve as the entry points to the investigation of the relationship between the local and global spectrum.
\begin{enumerate}[(a)]
    \item In \ref{subsec:random_contains_local} we establish that the random (and therefore the global) spectrum of the Bernoulli contains the local. As a corollary, we get containment of the local spectrum for all aperiodic graphings. 
    \item In \ref{subsec:Mikolay_example} we present an example, due to Fr{\k{a}}czyk, of a Bernoulli graphing where $\rho(G,o)<1$, but $\rho(\cal B)=1$.
    \item In \ref{subsec:quenched_vs_annealed} we show that the annealed spectral radius of $(G,o)$ is the essential supremum of the quenched one, so the two coincide for ergodic URG's.
\end{enumerate}   

\subsection{The random spectrum contains the local spectrum} \label{subsec:random_contains_local}

\begin{theorem}\label{thm:spec.containment}
    Let $(G,o)$ be an ergodic unimodular random graph, and $\mathcal{B}$ its Bernoulli graphing. Then $\sigma (G,o) \subset \sigma_R (\cal B)$ almost surely. 
\end{theorem}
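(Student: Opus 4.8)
The plan is to show that every $\lambda\in\sigma(G,o)$ is an approximate eigenvalue of $\mathcal M|_R$, and then conclude $\lambda\in\sigma_R(\mathcal B)$ from Proposition~\ref{prop:app.eval} applied to the self-adjoint operator $\mathcal M|_R$ (self-adjoint because $R$ is $\mathcal M$-invariant by Corollary~\ref{cor:proj.markov}). Observe first that $\sigma(M_G)$ does not depend on the root, so it is a rerooting-invariant random closed subset of $[-1,1]$; since $(G,o)$ is ergodic it equals $\nu$-a.s.\ a deterministic set $\Sigma$, and $\sigma_R(\mathcal B)$ is closed. Hence it suffices to fix $\lambda\in\Sigma$ and $\varepsilon>0$ and produce a nonzero $\Phi\in R$ with $\|(\lambda-\mathcal M)\Phi\|_{L^2(\mu^\ast)}\le\varepsilon\,\|\Phi\|_{L^2(\mu^\ast)}$.

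The idea is to transfer an approximate eigenvector of $M_G$, localized near the root, and to decouple its copies across vertices using the i.i.d.\ labels. For $\nu$-a.e.\ $(G,o)$ one has $\lambda\in\sigma(M_G)$, so Proposition~\ref{prop:app.eval} gives a unit vector in $\ell^2(G,\deg)$ with small $(\lambda-M_G)$-image; truncating it to a large ball about $o$ and renormalizing (finitely supported vectors are dense and $M_G$ is a contraction) shows that
\[
m_s(G,o):=\inf\{\|(\lambda-M_G)v\|_{\ell^2(G,\deg)}:\|v\|=1,\ \supp v\subseteq B_G(o,s)\},
\]
which depends only on the ball $B_G(o,s+1)$ and hence is Borel, decreases to $0$ for $\nu$-a.e.\ $(G,o)$. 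Fix $N$ with $\nu(A_N)>0$ for $A_N:=\{m_N<\varepsilon\}$, and use a Borel, automorphism-equivariant selection to choose, for $(G,o)\in A_N$, a unit vector $\phi_{(G,o)}\in\ell^2(G,\deg)$ supported in $B_G(o,N)$ with $\|(\lambda-M_G)\phi_{(G,o)}\|<\varepsilon$, depending only on the unlabelled ball $B_G(o,N)$; set $\phi_{(G,o)}=0$ off $A_N$.

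Now fix a bounded Borel $\zeta\colon[0,1]\to\RR$ with $\int_0^1\zeta=0$ and $\int_0^1\zeta^2=1$, and define
\[
\Phi(H,\eta,o):=\sum_{v\in V(H)}\zeta(\eta(v))\,\phi_{(H,v)}(o),
\]
a finite sum (only $v\in B_H(o,N)$ contribute), so $\Phi$ is a genuine Borel function on $\GG^{+}$. Since $\int\Phi(H,\eta,o)\,d\beta_H(\eta)=\bigl(\int_0^1\zeta\bigr)\sum_v\phi_{(H,v)}(o)=0$, Corollary~\ref{cor:random.char} gives $\Phi\in R$. Because $\mathcal M$ acts componentwise, $(\lambda-\mathcal M)\Phi(H,\eta,o)=\sum_v\zeta(\eta(v))\,[(\lambda-M_H)\phi_{(H,v)}](o)$. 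Integrating $|\Phi|^2$ and $|(\lambda-\mathcal M)\Phi|^2$ over the labels kills every cross term, because the $\zeta(\eta(v))$ are i.i.d.\ with mean $0$ and variance $1$ and $\phi$ is label-independent, leaving $\sum_v|\phi_{(H,v)}(o)|^2$ and $\sum_v|[(\lambda-M_H)\phi_{(H,v)}](o)|^2$ respectively; applying the Mass Transport Principle to the Borel functions $(H,o,v)\mapsto\deg(o)|\phi_{(H,v)}(o)|^2$ and $(H,o,v)\mapsto\deg(o)|[(\lambda-M_H)\phi_{(H,v)}](o)|^2$ turns the $L^2(\mu^\ast)$-norms into $\tfrac1Z\int_{A_N}\|\phi_{(H,o)}\|_{\ell^2(H,\deg)}^2\,d\nu=\tfrac{\nu(A_N)}{Z}$ and $\tfrac1Z\int_{A_N}\|(\lambda-M_H)\phi_{(H,o)}\|_{\ell^2(H,\deg)}^2\,d\nu<\tfrac{\varepsilon^2\nu(A_N)}{Z}$, with $Z=\EE_\nu[\deg(o)]$. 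Thus $\Phi\neq0$ and $\|(\lambda-\mathcal M)\Phi\|_{L^2(\mu^\ast)}<\varepsilon\,\|\Phi\|_{L^2(\mu^\ast)}$; letting $\varepsilon\to0$ and invoking Proposition~\ref{prop:app.eval} yields $\lambda\in\sigma_R(\mathcal B)$.

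The main obstacle is the second paragraph: producing a localized approximate eigenvector around the \emph{prescribed} root $o$ with an a.s.\ finite radius, and making the selection of $\phi$ Borel and automorphism-equivariant. The resulting $\varepsilon^2$-sized exceptional set $A_N^{c}$ is harmless — indeed the norm estimates above come out \emph{exact} rather than lossy — precisely because we spread a copy of $\phi$ over every vertex weighted by i.i.d.\ mean-zero labels; using instead a sparse, well-separated set of "centres" would reintroduce a density-versus-separation loss that the i.i.d.\ decoupling sidesteps.
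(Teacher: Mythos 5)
Your approach is genuinely different from the paper's. You spread a copy of the local approximate eigenvector over \emph{every} vertex with i.i.d.\ mean-zero weights $\zeta(\eta(v))$, and the decoupling of cross terms comes from i.i.d.\ orthogonality combined with the Mass Transport Principle, giving exact $L^2$-identities. The paper instead uses one auxiliary label $\xi_1$ to select a sparse set of ``winning'' centres with pairwise disjoint supports (the set $A_n$), and attaches a mean-zero sign $\xi_2$ at each centre; the disjointness then produces the same exact normalizations. So your closing remark that a sparse-centre scheme ``would reintroduce a density-versus-separation loss'' is not fair to the paper's argument, which also avoids any loss. Modulo the selection step, your computation — the mean-zero criterion from Corollary~\ref{cor:random.char}, the vanishing of cross terms, and the two Mass Transport calculations yielding $\|\Phi\|^2=\nu(A_N)/Z$ and $\|(\lambda-\mathcal M)\Phi\|^2<\varepsilon^2\nu(A_N)/Z$ — is correct.

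The gap is precisely where you flag it: the selection of $\phi_{(G,o)}$. For $\Phi$ to be a Borel function on $\GG^+$, which is a space of \emph{isomorphism classes}, the assignment $(G,v,o)\mapsto\phi_{(G,v)}(o)$ must descend to $\GGG$, and this forces $\phi_{(G,v)}$ to be invariant under the automorphisms of $(G,v)$ restricted to the ball $B_G(v,N+1)$. The set of unit vectors $u$ supported in $B_G(v,N)$ with $\|(\lambda-M_G)u\|<\varepsilon$ is a compact set invariant under this finite group, but such a set need not contain a fixed point: symmetrizing a witness that happens to lie in a nontrivial isotypic component can annihilate it. So an ``automorphism-equivariant'' selection ``depending only on the unlabelled ball'' is not justified, and I do not see why it should exist in general. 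The paper resolves exactly this difficulty by splitting the label $\eta=(\eta_1,\eta_2)$ (via Remark~\ref{rem:change.labels}): $\eta_1$ breaks symmetry, making the labelled ball a.s.\ rigid so that a measurable selection $\phi_{(H,\eta_1,v)}$ is unproblematic, while the mean-zero ingredient uses only $\eta_2$. Adopting the same split — $\Phi(H,\eta,o)=\sum_v\zeta(\eta_2(v))\,\phi_{(H,\eta_1,v)}(o)$ with $\phi_{(H,\eta_1,v)}$ depending only on $\eta_1|_{B_H(v,N)}$ — preserves the independence of $\zeta(\eta_2(v))$ from every $\phi_{(H,\eta_1,v')}$, so your cross-term cancellation and MTP computations go through unchanged and the argument is then complete.
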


\begin{proof}

Let $\lambda\in \sigma(G,o)$. By ergodicity we have that $\lambda \in \sigma(M_{(G,o)})$ for almost every $(G,o) \in \GG$. The operator $M_{(G,o)}$ is self-adjoint, by Proposition \ref{prop:app.eval} we have that $\lambda$ is an approximate eigenvalue. It follows that there exists a family of measurable assignments $(G,o)\in \GG \mapsto f_n^{(G,o)} \in \ell^2 (G)$ such that \begin{itemize}
    \item[(a)] $\|f_n^{(G,o)}\| = 1$ for every $n$,
    \item[(b)] the $f_n^{(G,o)}$ are finitely supported, and
    \item[(c)] $\|(M_G - \lambda) f_n^{(G,o)}\|^2 \leq \frac1n$. 
\end{itemize}
By the Isomorphism Theorem for standard probability spaces \cite[Theorem 17.41]{kech}, we may assume that Bernoulli labels on vertices are of the form $\xi = (\xi_1,\xi_2)$, where $\xi_1$ is a uniform iid label on $[0,1]$, and $\xi_2$ is a uniformly sampled iid label from $\{-1,1\}$.

Let $A_n \subset \GG^+$ be the Borel subset of those $(G,\xi,o)\in \GG^+$ such that for every other $(G,\xi,o')$ with $$
B_1(\supp (f_n^{(G,o)}) )\cap B_1 (\supp (f_n^{(G,o')})) = \emptyset,
$$
either $\diam (\supp(f_n^{(G,o')})) > \diam(\supp(f_n^{(G,o)}))$ or $\xi_1(o) > \xi_1(o')$. By (b) we have $\mu (A_n) > 0$. Let $B_n \subset \GG^+$ be the Borel subset consisting of those $(G,\xi,o) \in \GG^+$ such that $o \in \supp(f_n^{(G,o')})$ for some $(G,\xi,o')\in A_n$. By construction,  such $o'$ is uniquely determined for almost every $(G,\xi,o)$ independently of $\xi_2$, so we denote it as $\phi_n (G,\xi_1,o)$. 

    Our aim is to produce a sequence of functions $F_n \in R$ such that $\|(\cal M - \lambda) F_n\| \to 0$ and $\|F_n\| = 1$. We define them as follows. For each $n$, let $F_n \in L^2 (\GG^+,\mu^\ast)$ be defined by $$
    F_n (G,\xi,o) := \frac{1}{\mu(A_n)^{\frac12}\deg_G(o)^{\frac12}} f_n^{(G,\phi_n (G,\xi_1,o))} (o)\xi_2 (\phi_n (G,\xi_1,o)) ,\ \text{if}\ (G,\xi,o)\in A_n,
    $$ 
    and 0 otherwise. On the one hand, by the Mass Transport Principle and (a),$$
    \|F_n\|^2 = \int_{B_n} \frac{|f_n^{\phi_n(G,o,\xi_1)} (o)|^2}{\mu(A_n)} d\mu(G,\xi,o) = \frac{1}{\mu(A_n)} \int_{A_n} d\mu(G,\xi,o) = 1.
    $$
    On the other hand, again by the Mass Transport Principle, but now by (c), \begin{align*}
        \|(\cal M - \lambda) F_n \|^2 &= \frac{1}{\mu(A_n)}\int_{A_n} \|(M_G -\lambda)f_n^{(G,o)}\|^2 d\mu(G,o) \leq \frac{1}{n} \to 0.
    \end{align*}

The above shows that $\lambda \in \sigma(\cal M)$. To conclude the proof, it is only left to show that $F_n \in R$. To do this, by Corollary \ref{cor:random.char}, it suffices to show that for almost every $(G,o)$,$$
\int_{[0,1]^{V(G)}}\int_{\{-1,1\})^{V(G)}} f_n^{(G,\phi_n (G,\xi_1,o))} (o)\xi_2 (\phi_n (G,\xi_1,o)) d\beta_G(\xi_1) d\beta_G(\xi_2) = 0,
$$
However, since $\phi_n$ does not depend on $\xi_2$, the above integral equals  $$
\int_{[0,1]^{V(G)}} f_n^{(G,\phi_n (G,\xi_1,o))} (o) \int_{\{-1,1\})^{V(G)}}\xi_2 (\phi_n (G,\xi_1,o))d\beta_G(\xi_2) d\beta_G(\xi_1),
$$
where the term $\int_{\{-1,1\})^{V(G)}}\xi_2 (\phi_n (G,\xi_1,o))d\beta_G(\xi_2)=0$ for almost every $\xi_1$, concluding the proof.
\end{proof}

A corollary of the above Theorem is the following. 

\begin{corollary}\label{cor:alon.boppana}
   Let $(\cal G,X,\mu)$ be an aperiodic ergodic graphing with associated unimodular random graph $(G,o)$. Then $\sigma (G,o) \subset \sigma (\cal G)$.   
\end{corollary}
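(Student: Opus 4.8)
The plan is to deduce Corollary~\ref{cor:alon.boppana} from Theorem~\ref{thm:spec.containment} by reducing the case of an arbitrary aperiodic ergodic graphing $(\mathcal G,X,\mu)$ to the case of the Bernoulli graphing of the same unimodular random graph $(G,o)$. The key point is that the Bernoulli graphing is, in an appropriate sense, the \emph{smallest} graphing representing a given unimodular random graph: any aperiodic graphing $\mathcal G$ representing $(G,o)$ factors onto the Bernoulli graphing $\mathcal B$ via a measure-preserving graph homomorphism. Concretely, aperiodicity lets one attach iid uniform $[0,1]$-labels to the vertices of $\mathcal G$ (using a measurable labeling that is independent of the graph structure, which exists since the equivalence relation generated by $\mathcal G$ is aperiodic), and reading off the rooted labeled component of a $\mu$-random point gives a Borel map $\pi\colon X \to \GG^+$ with $\pi_*\mu^* = \mu_{\mathcal B}^*$ that carries $\mathcal G$-edges to $\mathcal B$-edges and commutes with the two Markov operators. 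This is exactly the minimality statement for Bernoulli graphings; I would cite it from \cite{HLS} (or reprove the one-line measurable construction if a self-contained argument is preferred).

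First I would record the factor map $\pi$ and verify the intertwining relation: $\pi$ induces an isometric embedding $\iota_\pi\colon L^2(\GG^+,\mu_{\mathcal B}^*)\hookrightarrow L^2(X,\mu^*)$, $g\mapsto g\circ\pi$, whose range is a $\mathcal M_{\mathcal G}$-invariant subspace, and on that subspace $\mathcal M_{\mathcal G}$ acts as $\mathcal M_{\mathcal B}$, i.e.\ $\mathcal M_{\mathcal G}\iota_\pi = \iota_\pi \mathcal M_{\mathcal B}$. The intertwining follows because $\pi$ sends the neighbours of $x$ in $\mathcal G$ bijectively onto the neighbours of $\pi(x)$ in $\mathcal B$ and preserves degrees, so averaging over $\mathcal G$-neighbours downstairs matches averaging over $\mathcal B$-neighbours upstairs. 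Consequently $\sigma(\mathcal M_{\mathcal B}) = \sigma(\mathcal M_{\mathcal G}|_{\operatorname{ran}\iota_\pi}) \subseteq \sigma(\mathcal M_{\mathcal G}) = \sigma(\mathcal G)$ — one has to be slightly careful here, since the spectrum of a restriction to an invariant subspace need not be contained in the spectrum of the whole operator in general; but approximate eigenvectors (Proposition~\ref{prop:app.eval}) transfer verbatim, so every $\lambda\in\sigma(\mathcal M_{\mathcal B})$ is an approximate eigenvalue of $\mathcal M_{\mathcal G}$, hence lies in $\sigma(\mathcal G)$, and one only needs to separately check that the trivial eigenvalue $1$ does not cause trouble, which it does not because $\pi$ identifies constants with constants and we work on $L^2_0$ throughout.

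Then I would chain the two containments. By Theorem~\ref{thm:spec.containment} applied to $(G,o)$ and its Bernoulli graphing, $\sigma(G,o)\subseteq \sigma_R(\mathcal B)\subseteq\sigma(\mathcal B)$; by the factor-map argument $\sigma(\mathcal B)\subseteq\sigma(\mathcal G)$. Hence $\sigma(G,o)\subseteq\sigma(\mathcal G)$, which is the claim. (Ergodicity of $\mathcal G$ is used only to know that $(G,o)$ is ergodic, so that Theorem~\ref{thm:spec.containment} applies and $\rho(G,o)$ is a genuine constant; aperiodicity is used only to produce the iid labeling, i.e.\ to guarantee that the Bernoulli graphing really is a factor.)

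The main obstacle I anticipate is the spectral-subspace subtlety: passing from ``$\mathcal M_{\mathcal B}$ is the restriction of $\mathcal M_{\mathcal G}$ to an invariant subspace'' to ``$\sigma(\mathcal M_{\mathcal B})\subseteq\sigma(\mathcal M_{\mathcal G})$'' is not automatic for a non-reducing invariant subspace of a general bounded operator, so the clean way is to avoid the abstract inclusion and instead push approximate eigenvectors through $\iota_\pi$, which is immediate since $\iota_\pi$ is an isometry commuting with the operators. The only other point requiring mild care is the existence of the factor map $\pi$ in the stated generality — this is where the hypothesis ``aperiodic'' enters and where I would lean on \cite{HLS}; everything else is bookkeeping about degree-biased measures and Markov operators that parallels the computations already carried out in Section~\ref{subsec:random_and_structured}.
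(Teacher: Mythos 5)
Your high-level plan (reduce to the Bernoulli graphing via minimality, then chain with Theorem~\ref{thm:spec.containment}) matches the paper's, but the key technical input is stated too strongly, and in a way that is actually false. You claim that aperiodicity yields a genuine factor map $\pi\colon X \to \GG^+$ carrying $\mathcal G$-edges to $\mathcal B$-edges and pushing $\mu^*$ forward to $\mu_{\mathcal B}^*$, built by choosing a measurable iid uniform $[0,1]$-labeling of $X$ ``independent of the graph structure.'' Such a labeling, and hence such a factor map, need not exist. A concrete obstruction: take $\mathcal G$ to be the graphing generated by the Bernoulli shift of $\mathbb{Z}$ on $(\{0,1\}^{\mathbb{Z}}, (\tfrac12,\tfrac12)^{\otimes\mathbb{Z}})$. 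This is aperiodic, ergodic, and represents the unimodular random graph $(\mathbb{Z},0)$, but its Kolmogorov--Sinai entropy is $\log 2 < \infty$, whereas the Bernoulli graphing of $(\mathbb{Z},0)$ is the shift on $([0,1]^{\mathbb{Z}}, \mathrm{Leb}^{\otimes \mathbb{Z}})$, which has infinite entropy. Since entropy cannot increase under factor maps, $\mathcal G$ does not factor onto $\mathcal B$. What you are actually constructing by ``attaching iid labels'' is the Bernoulli extension $\mathcal G^+$, and that does factor onto $\mathcal B$ — but then you get $\sigma(\mathcal B) \subseteq \sigma(\mathcal G^+)$, not $\sigma(\mathcal B)\subseteq\sigma(\mathcal G)$, which is what you need.

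What the paper invokes from \cite{HLS} (Corollary 7.7) is strictly weaker: the Bernoulli graphing is \emph{locally-globally contained} in every aperiodic graphing representing the same unimodular random graph. Local-global containment is an approximation statement about colored neighborhood statistics — roughly, every $k$-coloring of $\mathcal B$ can be approximated, as a distribution on colored balls, by $k$-colorings of $\mathcal G$ — and not an exact factor map. It is still enough for the spectral conclusion: approximate eigenfunctions of $\mathcal M_{\mathcal B}$ can be taken to be block factors, these are determined by their colored local statistics, local-global containment lets you approximately realize them on $\mathcal G$, and the Markov operator is local, so the approximate eigenvalue property survives the transfer. Your instinct to ``push approximate eigenvectors rather than invoke abstract spectral inclusion for invariant subspaces'' is exactly the right move at that stage; the gap is one step earlier, in asserting an exact factor where only an approximate (local-global) relation holds.
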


\begin{proof}
    By minimality of aperiodic Bernoulli  graphings \cite[Corollary 7.7]{HLS}, the Bernoulli graphing $\cal B$ of $(G,o)$ is locally-global contained in $\cal G$. If thus follows that $\sigma (\cal B) \subset \sigma (\cal G)$, and the conclusion follows from Theorem \ref{thm:spec.containment}.
\end{proof}

\begin{remark}
    In any graphing $(\cal G,X,\mu)$ a sampling map $s\colon X \mapsto \GG$ can be defined by $x\in X \mapsto (\mathcal G_x, x)\in \GG$, where $\mathcal G_x$ denotes the $\mathcal G$-connected component of $x$. Structured and random subspaces of $L^2 (\mathcal G,X,\mu)$ can thus be defined and the decomposition of the spectrum $\sigma (\mathcal G)$ into a structured and random part carries over. 

    The content of Section \ref{subsec:structured_spectrum} holds in the generality of graphings with the above sampling map, and the proof of Theorem \ref{thm:spec.containment} in this generality shows that $\sigma (G,o) \subset \sigma_R (\mathcal G^+)$, where $\sigma_R (\mathcal G^+)$ is the random part of the spectrum of the Bernoulli extension $\mathcal G^+$. 

    It is important to remark that Theorem \ref{thm:spec.containment} does not hold for arbitrary graphings, as there are graphings for which the sampling map is an isomorphism up to null sets, and so the random subspace is trivial. For example, if $(T,o)$ is a unimodular Galton-Watson tree with law $\nu$, and let $\mathcal G$ be defined on $\GG$ by $$
    (T,o) \mathcal G (T,o') \ \text{if and only if}\ (o,o') \in T.
    $$
    Then $(\mathcal G, \GG,\nu)$ is a graphing by \cite[Lemma 18.40]{lovasz2012large} and, upon dropping a $\nu$-null set, the sampling map coincides with the identity.
\end{remark}

\subsection{A URG with spectral gap but non-expanding Bernoulli graphing} \label{subsec:Mikolay_example}

In this section we construct an ergodic unimodular random graph $(G,o)$ with spectral radius $\rho (G,o) < 1$ but such that $\rho (\mathcal B) = 1$, where $\mathcal B$ denotes the Bernoulli graphing of $(G,o)$. As we discuss below in Remark \ref{rem:example} this example is a motivation for Theorem \ref{thm:intro.rel.ram}. Such an example was first communicated to the authors by Miko{\l}aj Fr{\k{a}}czyk. 

\begin{proof}[Proof of Theorem~\ref{thm:mikolay_intro}]
Let $\FF_2 = \langle a,b  \rangle$ denote the free group on two generators and $\phi \colon \FF_2 \to \ZZ$ the homomorphism defined by $\phi(a)=\phi(b) = 1$. We construct a unimodular random coloured graph $(T,c,o)$ by the following sampling rule. The graph $T$ is always the 4-regular tree, with vertices identified with elements of $\FF_2$ by the standard Cayley graph of the latter, rooted at the identity $o$. The colouring $c = (i,b)$ consists of two labels. The label $i$ is determined by choosing $i_o \in \{0,1,2\}$ uniformly at random, and letting $i(g) = \phi(g) + i_o \ (\textrm{mod }  3)$. The label $b$ is obtained by pulling back a Bernoulli $\{R,B\}$-labelling of $\ZZ$
to $T$ by $\phi$.

We now construct $(G,o)$ as a factor of $(T,c,o)$. First, fix a bijection $l\colon \{0,1,2\}\times \{R,B\} \to [6]$. Then, sample from $(T,c,o')$, and let $G$ be the tree constructed by attaching a path $P_v$ of length $l(v) - 1$ to each $v\in V(T)$. The root $o$ is placed on a vertex of $P_{o'} \cup\{o'\}$ uniformly at random.

We claim that $(G,o)$ satisfies the conclusions of the proposition. Ergodicity of $(G,o)$ follows from the fact that the diagonal action of $\ZZ$ on the product of the $\{R,B\}$-Bernoulli shift and $(\ZZ / 3\ZZ)$ is ergodic, as $(T,c,o)$ is a factor of this action. Furthermore, $\rho(G,o) <1$ follows from Kesten's Theorem \cite{kesten1959full,kesten1959symmetric}, as the Cheeger constant $h$ of $G$ is easily seen to satisfy the inequality $$
h(G) \geq \frac{h(T)}{6} > 0.
$$

Finally, to show that $\rho(\cal B) = 1$ we prove that the associated Bernoulli graphing $\mathcal B$ is not strongly ergodic. To do this, let $(G,o,c)$ be a Bernoulli random coloured version of $(G,o)$. By construction, the graph $(G,o)$ factors onto $(T,o,i,b,c)$, as the labels $i$, $b$, and $c$ can be read from the lengths of the paths attached to each vertex of $T$. The labels $i$ allow us to identify the vertices of $T$ with $\FF_2$ and thus to reconstruct the homomorphism $\phi$ in such a way that $\phi(g) = \phi(h)$ implies $b(g) = b(h)$. We can thus map $b$ down by $\phi$ to obtain a Bernoulli random colouring of $\ZZ$ as a factor. The latter is not strongly ergodic, so $\mathcal G$ cannot be strongly ergodic either.
\end{proof}

\begin{remark}\label{rem:example}
    Note that  Bernoulli labeling plays no part in the definition of the factor onto $2^\mathbb{Z}$, it can be built solely from the graph structure of $(G,o)$. This observation is the motivation for our first main result, Theorem~\ref{thm:intro.rel.ram}, where we formally prove that this is the only possible reason for $\rho(G,o) < \rho(\cal B)$.
\end{remark}

\begin{remark}
    The same construction as above, with slight modifications, can be carried out using any non-amenable finitely generated indicable group. The original idea for the above construction, to which the latter can actually be generalized, starts from a non-amenable, non-property (T) group $\Gamma$. On the one hand, the spectral radius of the group by non-amenability is strictly less than 1. On the other hand, by the Connes-Weiss Theorem, the group has an ergodic not strongly ergodic action. By the Jones-Schmidt Theorem \cite{JS}, there exists a $\Xi$-marking of $\Gamma$ with a non-trivial factor onto $2^\ZZ$.
\end{remark}

\begin{remark}
 In section ~\ref{subsubsec:ramanujan_graphs_and_graphings} we sketched an argument showing that Friedman's theorem implies that Bernoulli graphings of regular trees are Ramanujan. Here, the same argument shows that these examples, even though they are non-amenable in a strong sense, they are not local limits of expander sequences.  
\end{remark}

\subsection{Quenched versus annealed spectral radius}
\label{subsec:quenched_vs_annealed}

In this epigraph we show that for ergodic unimodular random graphs the spectral radius may be computed in both quenched and annealed forms. This technical observation is needed to prove Theorem \ref{thm:intro.rel.ram}.

 For each $(G,o) \in \GG$ we denote $$
p_{2n} (G,o) := \PP [X_{2n} = o],
$$ 
where $X_{i}$ is the $i$-th step of the uniform random walk on $G$ starting at the root $o$. The \emph{quenched spectral radius} is the re-rooting invariant random variable $$
\rho_q (G,o) = \lim_n \sqrt[2n] {p_{2n} (G,o)}.
$$
If $(G,o)$ is a unimodular random graph, then its \emph{annealed spectral radius} is defined as $$
\rho_a  = \lim_n \sqrt[2n]{\EE [p_{2n}]}.
$$

The aim of this section is to prove the following equality.

\begin{theorem}\label{thm:q.vs.a}
    Let $(G,o)$ be a unimodular random graph. Then $
    \rho_a = \esup_{(G,o)\in \GG} \rho_q (G,o).
    $ In particular, if $(G,o)$ is ergodic, then $\rho_a  = \rho_q (G,o)$ almost surely.
\end{theorem}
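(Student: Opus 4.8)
The plan is to compare the annealed quantity $\EE[p_{2n}]$ with the essential supremum $s := \esup_{(G,o)} \rho_q(G,o)$ of the quenched radii, establishing the inequality in both directions. The key structural fact I would use is that $\rho_q(G,o)$ is a rerooting-invariant random variable, hence (by disintegration over the ergodic decomposition) it suffices in the ergodic case to show $\rho_a = \rho_q$ a.s., but I will carry the general argument through since the essential supremum is the natural statement. Throughout, all quantities are in $[0,1]$ (degrees are bounded by $D$, return probabilities are at most $1$), so there are no integrability issues and dominated convergence is freely available.

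First, the easy direction $\rho_a \ge s$. Fix $\delta > 0$ and let $E \subseteq \GG$ be the rerooting-invariant event $\{\rho_q(G,o) \ge s - \delta\}$, which has positive $\nu$-measure by definition of the essential supremum. For $(G,o) \in E$ we have $p_{2n}(G,o) \ge (s-\delta)^{2n} e^{-o(n)}$ along a subsequence (and, more carefully, $\limsup_n p_{2n}(G,o)^{1/2n} = \rho_q(G,o)$ since the limit exists). Hence
\[
\EE[p_{2n}] \ge \int_E p_{2n}(G,o)\, d\nu(G,o),
\]
and applying the reverse Fatou / Fatou lemma to $p_{2n}^{1/2n}$ on the set $E$ of positive measure, together with the submultiplicativity estimate $p_{2(n+m)}(G,o) \ge p_{2n}(G,o)\,p_{2m}(G,o)$ coming from decomposing the walk at the return time (which makes $-\log p_{2n}$ subadditive, so $p_{2n}^{1/2n}$ is monotone and the limit equals the infimum), we get $\liminf_n \EE[p_{2n}]^{1/2n} \ge s - \delta$. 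Letting $\delta \to 0$ gives $\rho_a \ge s$; one should also check $\rho_a$ is well-defined as a limit, again via subadditivity of $-\log \EE[p_{2n}]$, which follows from the same decomposition-at-a-return-time argument integrated over $\nu$ (here one uses the Mass Transport Principle / stationarity to control the cross terms, much as in \cite{backhausz2015ramanujan}).

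The other direction $\rho_a \le s$ is the main obstacle. The naive bound $\EE[p_{2n}] \le \EE[(\rho_q + \eps)^{2n}]$ fails termwise because $p_{2n}(G,o)^{1/2n}$ converges to $\rho_q(G,o)$ only in the limit, not uniformly, and the convergence rate itself is a random, possibly unbounded quantity. The fix is a truncation-and-restart argument: choose $k = k(n)$ growing slowly (e.g. $k = \lfloor \sqrt n \rfloor$), decompose the $2n$-step return into $\approx n/k$ excursions of length $2k$ each, and write $p_{2n}(G,o) \le \EE_o\big[\prod p_{2k}(G, X_{2ki})\big]$ after conditioning on the intermediate positions — using that each intermediate vertex $X_{2ki}$, viewed as a fresh root, gives a graph with the same rerooting-invariant data, and that $p_{2k}(G, X_{2ki}) \le \sup_{v} p_{2k}(G,v)$. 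The quantity $q_k(G) := \esup_v p_{2k}(G,v)^{1/2k}$ is rerooting-invariant and converges $\nu$-a.s. to $\rho_q(G,o) \le s$ as $k \to \infty$ (this needs an argument: $p_{2k}(G,v)$ for a fixed component is asymptotically $\rho_q$-exponential uniformly in $v$ up to subexponential factors, by e.g. spectral-theoretic or Carne–Varopoulos-type bounds). Then by dominated convergence (everything bounded by $1$) and Egorov, for any $\eps>0$ we have $q_{k(n)}(G) \le s + \eps$ on an event of measure $\to 1$, while on the complement we bound $p_{2n} \le 1$; this forces $\EE[p_{2n}]^{1/2n} \le s + 2\eps$ for $n$ large. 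Assembling the two inequalities yields $\rho_a = s$, and ergodicity collapses the essential supremum to an a.s.\ constant, giving the final "in particular" clause. The delicate points to get right are (i) the uniform-in-$v$ exponential control of $p_{2k}(G,v)$ within a component and (ii) making the slowly-growing block length $k(n)$ interact correctly with the Egorov set — these are where I expect to spend the real effort.
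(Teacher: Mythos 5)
Your proposal inverts which direction of the equality is easy and which is hard, and the root cause is a sign error in the Fekete step.

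You write that the naive bound $\EE[p_{2n}] \le \EE[(\rho_q+\eps)^{2n}]$ ``fails termwise,'' and elsewhere that $p_{2n}^{1/2n}$ is ``monotone and the limit equals the infimum.'' Neither is right. Supermultiplicativity $p_{2(n+m)}(G,o)\ge p_{2n}(G,o)\,p_{2m}(G,o)$ says $-\log p_{2n}$ is \emph{sub}additive, so Fekete's lemma gives $\lim_n \tfrac{-\log p_{2n}}{2n} = \inf_n \tfrac{-\log p_{2n}}{2n}$, equivalently $\rho_q(G,o) = \sup_n p_{2n}(G,o)^{1/2n}$. Thus $p_{2n}(G,o) \le \rho_q(G,o)^{2n}$ holds \emph{pointwise for every $n$}, with no error term and no uniformity issue. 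Integrating gives $\EE[p_{2n}] \le \EE[\rho_q^{2n}] \le (\esup\rho_q)^{2n}$ immediately, which is exactly $\rho_a \le \esup\rho_q$. The truncation-and-restart machinery you propose for this direction (slowly growing block lengths, Carne--Varopoulos control, Egorov sets) is an elaborate detour to a bound you already have for free; and your auxiliary claim that $q_k(G)=\esup_v p_{2k}(G,v)^{1/2k}$ converges down to $\rho_q$ is itself just this pointwise Fekete inequality in disguise.

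Conversely, the direction you dismiss as easy, $\rho_a \ge \esup\rho_q$, is the one that needs work, and your Fatou argument has a gap. On $E=\{\rho_q\ge s-\delta\}$ you know only that $p_{2n}(G,o)^{1/2n}\to\rho_q(G,o)$ \emph{from below}, at a rate depending on $(G,o)$. Fatou applied to $p_{2n}^{1/2n}$ controls $\liminf_n\int_E p_{2n}^{1/2n}\,d\nu$, but to pass to $\bigl(\int_E p_{2n}\,d\nu\bigr)^{1/2n}$ you would need Jensen's inequality, which you never invoke; and ``reverse Fatou'' is of no help since $\limsup p_{2n}$ can be $0$. The paper handles this by contradiction: if $\rho_a<\esup\rho_q$, then $\rho_q\ge\rho_a+2\delta$ on a set of measure $\ge 2\eps$, the threshold $n_0$ beyond which $p_{2n}\ge(\rho_a+\delta)^{2n}$ can be taken uniform on a positive-measure subset (an Egorov-type uniformization), whence $\EE[p_{2n}]\ge\eps(\rho_a+\delta)^{2n}$ for all large $n$ and so $\rho_a\ge\rho_a+\delta$, a contradiction. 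Some such uniformization (or the missing Jensen step) is essential; a bare appeal to Fatou is not enough.
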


\begin{proof}
    Fix any element $(G,o) \in \GG$. A standard computation shows that $p_{2nl} (G,o) \geq p_{2n}(G,o)^l$ for any $n,l\in \NN$. It thus follows that $$
    \rho_q (G,o)^{2n} = \lim_{l\to\infty} \sqrt[l]{p_{2nl} (G,o)} \geq p_{2n} (G,o).
    $$
    In particular, we have the bound $$
    \esup \rho_q (G,o)^{2n} \geq p_{2n} (G,o),
    $$
    almost surely. It follows that $$
    \esup \rho_q (G,o)^{2n} \geq \EE [p_{2n} (G,o)],
    $$
    and therefore $$
    \esup \rho_q (G,o) \geq \rho_a.
    $$

    Suppose now that $\rho_a < \esup \rho_q (G,o)$. Then, there exist $\eps,\delta > 0$ such that, $$
    \PP[\rho_q (G,o) \geq \rho_a + 2\delta] \geq 2\eps.
    $$ 
    By definition of $\rho_q$, the above implies that for some $n_0 \in \NN$, for every $n\geq n_0$ we have that $$
    \PP[p_{2n} (G,o) \geq (\rho_a + \delta)^{2n}] \geq \eps.
    $$
    By definition of the annealed spectral radius we have that $$
    \rho_a \geq \sqrt[2n]{\eps (\rho_a + \delta)^{2n}} = \sqrt[2n]{\eps} (\rho_a + \delta) \to \rho_a +\delta.
    $$
    The latter implies that $\rho_a > \rho_a$, what is a contradiction. The conclusion follows.
\end{proof}

\section{Bernoulli graphings are relatively Ramanujan}\label{sec:ber.ram}

In this section we prove Theorem~\ref{thm:intro.rel.ram}, relying on the preparation done  in~\ref{subsec:random_and_structured} and \ref{subsec:quenched_vs_annealed}. 

\begin{proof}[Proof of Theorem~\ref{thm:intro.rel.ram}]

We aim to  show that $\rho_R (\cal B) \leq \rho(G,o)$. Since projections of block factors to $R$ are block factors themselves by Corollary~\ref{cor:proj.block}, it follows that block factors are dense in $R$. Let $f\in R$ be a normalised block factor of radius $r$. By \cite[Lemma 2.4]{backhausz2015ramanujan}, to conclude the proof it suffices to show that \begin{equation}\label{eq:goal.ramanujan}
    \sqrt[n]{\langle \mathcal M^n f,f \rangle} \leq (1+ o(1)) \rho (G,o).
\end{equation} 

Let us begin the computations. By definition, $$
\langle\cal M^n f , f \rangle = \int_{\GG^+} \sum_{v\in V(G)} p_n (o,v) f(G,\xi,v) \overline{f(G,\xi,o)} d\mu^\ast (G,\xi,o).
$$    
We may separate the above integral into two summands \begin{align*}
    \int_{\GG}\sum_{v\in B_{2r} (o)} p_n (o,v)\int_{[0,1]^{V(G)}}  f(G,\xi,v) \overline{f(G,\xi,o) }d\beta(\omega) \deg_G (o)d(G,o)\\
    + \int_{\GG} \sum_{v\not\in B_{2r} (o)} p_n (o,v)\int_{[0,1]^{V(G)}}   f(G,\xi,v) \overline{f(G,\xi,o)}d\beta(\omega) \deg_G (o)d(G,o).
\end{align*}
By the block factor assumption and Corollary \ref{cor:random.char}, we have that, whenever $v\not\in B_{2r} (o)$, the random variables $f(G,\xi,v)$ and $f(G,\xi,o)$ are independent. Therefore, \begin{align*}
&\int_{[0,1]^{V(G)}}  f(G,\xi,v) \overline{f(G,\xi,o)} d\beta (\omega) \\
&= \int_{[0,1]^{V(G)}}   f(G,\xi,v)  d\beta (\omega)\int_{[0,1]^{V(G)}}   \overline{f(G,\xi,o)} d\beta (\omega) = 0.
\end{align*} 
Hence, there is only a contribution from the first term of the sum. 

Using Cauchy-Schwarz this second term may be bounded above by \begin{equation}\label{eq:sec.6.eq.1}
    \int_{\GG}  \sum_{v\in B_{2r} (o)} p_n (o,v) \|f(G,\cdot,v)\|_2 \|f(G,\cdot,o)\|_2  \deg_G(o)d (G,o).
\end{equation}
If we let $\GG^r \subset \GG$ denote the finite subset of rooted graphs of radius at most $r$, then $$
1 = \|f\| = \sum_{F\in \GG^r} \nu^\ast\{F\} \|f(F,\cdot)\|_2^2,
$$
where $\nu^\ast$ denotes the degree-biased version of the law of $(G,o)$. It follows that for every $F\in \GG^r$ such that $
\nu^* \{(G,o) : (B_G (o,r),o) \cong F\}\neq 0
$ we have $$
\|f(F,\cdot)\|_2 \leq \nu^* \{(G,o) : (B_G (o,r),o) \cong F\}^{-\frac{1}{2}}.
$$
If we let $K$ denote the maximum of the right hand side as $F$ ranges over $\GG^r$, we get that Equation (\ref{eq:sec.6.eq.1}) is bounded above by $
K^2 \EE_{\nu^\ast} [\PP[X_n \in B_{2r} (o)] ].
$

Combining all of the above we deduce that $$
\sqrt[n]{\langle \cal M^n f, f\rangle} \leq K^{\frac{2}{n}} \sqrt[n]{\EE_{\nu^\ast} [\PP[X_n \in B_r (o)] ]}.
$$
For some larger constant $L >0$ we obtain the bound \begin{align*}
\EE_{\nu^\ast} \left[\sum_{v\in B_r (o)} \PP [X_n =v]\right] &\leq   \EE_{\nu^\ast} \left[p_n (o,o) + \sum_{v\in B_r (o)\backslash o} \frac{1}{p_{d(o,v)} (o,v)} p_{n+ d(o,v)} (o,o)\right]  \\
&\leq L \EE_{\nu^\ast } [p_n (o,o)],
\end{align*}
By Theorem \ref{thm:q.vs.a} we deduce that $$
\sqrt[n]{\langle \cal M^n f, f\rangle} \leq (1 + o(1)) \rho_a (G,o) = (1 +o(1))\rho(G,o),
$$
concluding the proof.
\end{proof}

As we mentioned in the introduction, Theorem~\ref{thm:intro.rel.ram} complements Theorem \ref{thm:spec.containment}, showing that the spectral radius is achieved in the containment $\sigma(G,o) \subset \sigma_R (\cal B)$. 

\begin{proof}[Proof of Theorem~\ref{thm:Bernoulli_spectral_radius_equality}]
    Theorem \ref{thm:spec.containment} implies that $\rho(G,o) \leq \rho_R (\cal B)$, and Theorem~\ref{thm:intro.rel.ram} shows $\rho_R (\cal B) \leq \rho(G,o)$.
\end{proof}

\section{A finite-to-infinite strengthening}\label{sec:fin.to.inf}

In this section, we prove Theorem~\ref{thm:qt.spec.eq}, which is a strengthening of Theorem~\ref{thm:intro.rel.ram} for a special class of unimodular random graphs. This endeavor is motivated by two observations. First, in Section \ref{subsubsec:ramanujan_graphs_and_graphings} we discussed how deep theorems for finite graphs (e.g.\ Friedman's theorem) can imply results in a limiting setting (e.g.\ Theorem~\ref{thm:BSZV}). Second, Bernoulli graphings are reminiscent of random lifts of finite graphs, while the relative nature of Theorem~\ref{thm:intro.rel.ram} is reminiscent of new eigenvalues of such lifts. This leads naturally to the results of Bordenave and Collins recalled below.

Apart from the formal strengthening, our goal here is also to emphasize the parallels between the finite and infinite theory, with the hope that the two can aid and influence each other in future research. See also Section \ref{sec:further_directions} for further directions.

\subsection{New eigenvalues of random lifts}

As a first example, let us fix a finite base graph $H$, and build an $n$-fold random cover $H_n$ as follows. We set $[n]:=\{1,\dots,n\}$, let $V(H_n) = V(H)\times[n]$, and randomly add edges $e=(u,v) \in E(H_n)$ by picking independent uniform random bijections between $\{u\}\times[n]$ and $\{v\}\times[n]$, placing edges in $H_n$ accordingly.  

Recall that a graph homomorphism is a \emph{covering} if the neighbourhood of every vertex is bijectively mapped onto its image.We write $p_n \colon H_n \to H$ to denote the covering graph homomorphism defined by $p_n (u,i) = u$ for any $(u,i) \in V(H) \times [n]$.  Therefore $p_n$ induces an embedding $(p_n)^*: \ell^2(H,\deg) \to \ell^2(H_n,\deg)$ that is equivariant with respect to the Markov operators. Hence $\sigma(H) \subset \sigma(H_n)$ as multisets, and we define the multiset of \emph{new eigenvalues} of $H_n$ as $\sigma_N(H_n) = \sigma(H_n) \setminus \sigma(H)$. The relative version of Friedman's theorem, which is a special case of the more general result we will recall shortly, states that $\sigma_N(H_n) \subseteq [-\rho(\tilde{H})-\varepsilon,\rho(\tilde{H})+\varepsilon]$ with high probability for any $\varepsilon>0$. Here $\tilde{H}$ denotes the universal cover of $H$, which also happens to be the local limit of the $H_n$. Note that $\tilde{H}$ is always a tree, and if $H$ is a bouquet of $d$ loops, then $\tilde{H}=T_d$, and one recovers the original Friedman theorem.

We now introduce the more general model studied in \cite{bordenave2020new}, and state the result that we will apply. Intuitively, the independence of the choices of bijections in the random lifts considered above ``opened up'' the overwhelming proportion of cycles of $H$, this is why the limit is a tree. In general, instead of picking all bijections of the random cover independently, we will allow dependencies, like using the same bijection above two edges of $H$, or using the product of the bijections of two edges at a third one. This way, some cycles of $H$ might not open up.

More precisely, let $\FF_d = \langle s_1, \dots, s_d\rangle$ denote a free group on $d\in \NN$ generators. If $H$ is a graph, we let $\vec{E} (H)$ denote the set of oriented edges of $H$ and say that a map $\phi \colon \vec{E}(H) \to \FF_d$ is \emph{homomorphism-like} if $\phi_{(u,v)} = (\phi_{(v,u)})^{-1}$ for every $(u,v) \in \vec{E} (H)$. We use $\Sym ([n])$ to denote the set of permutations on $[n]$. If $g \in \FF_d$ and $S_1,\dots,S_d \in \Sym([n])$, we let $g[S_1,\dots,S_d] \in \Sym([n])$ denote the image of $g$ under the homomorphism $\FF_d \to \Sym ([n])$ defined by $   s_i \in  \FF_d  \mapsto S_i \in \Sym([n])$.

\begin{definition}\label{def:BC.lift}
    Let $H$ be a finite graph and $\phi \colon \vec{E}(H) \to \FF_d$ a homomorphism-like map. The $n$-th $\phi$\emph{-random lift} of $H$ is the random graph $H_n$ defined on the vertex set $V(H) \times [n]$ as follows. First, sample $S_1, \dots, S_d \in \Sym ([n])$ independently at random from the uniform distribution on $\Sym ([n])$. Then, for each $e = (u,v) \in \vec{E}(H)$ add an edge in $E(H_n)$ connecting $(u,i) \in V(H_n)$ with $(v, \phi_{(u,v)} [S_1, \dots,S_d] (i))$. 
\end{definition}

Any homomorphism-like map extends to a homomorphism $\overline \phi \colon \pi_1 (H) \to \FF_d$, where $\pi_1 (H)$ is the fundamental group of $H$, as follows. Fix a basepoint $v_0 \in V(H)$ and let $C$ be a returning walk on $H$, starting and finishing at $v_0$. Denote by $(e_1,\dots,e_n)$ its edge sequence and let $\phi' (C) = \phi_{e_1}\dots \phi_{e_n}$. It is easy to see that if $C$ is homotopy-equivalent to $C'$, then $C'$ may be obtained from $C$ by adding or deleting length two subpaths arising from traversing an edge back and forth. Since $\phi$ is homomorphism-like, we have that $\phi' (C) = \phi'(C')$. It thus follows that for any $[C] \in \pi_1 (H)$, the map $\overline\phi ([C]) := \phi' (C)$ is a well-defined homomorphism. 

It is straightforward to see that the $\phi$-random lifts $(H_n)_n$ converge locally to $G:= \tilde H / \ker (\overline\phi)$ with probability 1. The next theorem establishes a much stronger form of convergence.

\begin{theorem}[Bordenave--Collins \cite{bordenave2019eigenvalues}]\label{thm:BC}
Let $H$ be a finite graph and $\phi\colon \vec{E}(H) \to \FF_d$ a homomorphism-like map. If $(H_n)_n$ are $\varphi$-random lifts, and $G= \tilde H / \ker (\overline\phi)$, then $\sigma (H_n) \to \sigma (G)$ in the Haussdorff distance in probability.
\end{theorem}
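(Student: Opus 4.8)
Theorem~\ref{thm:BC} is the main result of Bordenave and Collins, and I would not attempt a self-contained proof; the plan is to isolate where the difficulty lies and to dispatch the soft half by hand. The convergence $d_H(\sigma(H_n),\sigma(G))\to 0$ in probability splits into two assertions: (i) for every $\delta>0$, with probability tending to $1$ every eigenvalue of $H_n$ lies within $\delta$ of $\sigma(G)$ (``no outlier eigenvalues''); and (ii) for every $\delta>0$, with probability tending to $1$ every point of $\sigma(G)$ is within $\delta$ of some eigenvalue of $H_n$ (``no gaps''). Assertion (i) is the substantive content of \cite{bordenave2019eigenvalues}; assertion (ii) is soft and I would argue it directly.

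For (ii), I would use that the $\phi$-random lifts Benjamini--Schramm converge to $(G,o)$ almost surely, which forces the empirical spectral distributions of the $M_{H_n}$ to converge weakly (almost surely) to the density of states $\mu_{G}=\mathbb{E}\,\langle E_{M_G}(\cdot)\delta_o,\delta_o\rangle$ of $(G,o)$. Now $G=\tilde H/\ker(\overline\phi)$ is a regular covering of the finite graph $H$ with deck group $\overline\phi(\pi_1(H))$, hence quasi-transitive; for a quasi-transitive graph the automorphisms act by unitaries commuting with the Markov operator, so the spectral measure $\langle E_{M_G}(\cdot)\delta_v,\delta_v\rangle$ depends only on the orbit of $v$, and $\sigma(G)$ equals the (finite) union of the supports of these orbitwise spectral measures; since $\mu_G$ is a positive average of them, $\supp\mu_G=\sigma(G)$. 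Hence for $\lambda\in\sigma(G)$ and $\delta>0$ we have $\mu_G((\lambda-\delta,\lambda+\delta))>0$, so by the portmanteau theorem $H_n$ has an eigenvalue in $(\lambda-\delta,\lambda+\delta)$ for all large $n$ almost surely; covering the compact set $\sigma(G)$ by finitely many such intervals yields (ii).

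For (i) --- the hard part --- I would first record the reformulation that makes \cite{bordenave2019eigenvalues} applicable. On $\ell^2(V(H_n))\cong \ell^2(V(H))\otimes\ell^2([n])$ the (degree-normalised) adjacency operator of $H_n$ is a matrix-coefficient $*$-polynomial in the independent uniform random permutation matrices $S_1,\dots,S_d$, namely $M_{H_n}=\sum_{(u,v)\in\vec{E}(H)}\deg(u)^{-1}\,E_{vu}\otimes \phi_{(u,v)}[S_1,\dots,S_d]$; replacing each $S_i$ by the left-regular representation $\lambda_{\FF_d}(s_i)$ on $\ell^2(\FF_d)$ turns this into the Markov operator of the full $\FF_d$-voltage lift of $(H,\phi)$ on $V(H)\times\FF_d$, every connected component of which is isomorphic to $G$, so the limiting operator has spectrum exactly $\sigma(G)$. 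The Bordenave--Collins machinery then shows that the spectrum of the former converges in probability, in Hausdorff distance, to that of the latter --- this is where all the work is. The key estimate is of trace-method type: one passes to the non-backtracking operator of $H_n$ (linked to $M_{H_n}$ by an Ihara--Bass identity, hence controlling the outlier eigenvalues), splits walks of length $\ell\asymp\log n$ into tangle-free and tangle-containing parts, and bounds expected traces of high powers via a Weingarten-type expansion for uniform random permutations, matching the dominant combinatorial term with the free-group limit $G$. Reproducing this is the main obstacle; accordingly I would not attempt it and would cite \cite{bordenave2019eigenvalues} directly.
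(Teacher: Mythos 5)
The paper offers no proof of Theorem~\ref{thm:BC}: it is stated as a black-box citation to \cite{bordenave2019eigenvalues} and used as such, so there is no internal argument to compare yours against. Your treatment is the right one in spirit — you correctly identify that this is Bordenave and Collins' main theorem, decline to reproduce the hard direction, and spell out the decomposition of Hausdorff convergence into ``no outliers'' (the substantive content of \cite{bordenave2019eigenvalues}) and ``no gaps'' (soft). Your argument for the soft half is correct and is worth noting as genuinely added content: the Benjamini--Schramm convergence of the $\phi$-random lifts to $(G,o)$ gives weak convergence of empirical spectral measures to the expected spectral measure $\mu_G$, and your observation that $\supp\mu_G=\sigma(G)$ because $G$ is quasi-transitive (so the vertex spectral measures depend only on finitely many orbits, and the $\delta_v$ are jointly cyclic) is exactly what is needed to upgrade weak convergence of measures to the ``every point of $\sigma(G)$ is approximated'' half of Hausdorff convergence via portmanteau. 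Your paragraph on (i) — the matrix-coefficient $*$-polynomial reformulation, the passage to the left-regular representation giving the full voltage lift with spectrum $\sigma(G)$, and the non-backtracking / Ihara--Bass / tangle-free trace-method machinery — is an accurate high-level description of what \cite{bordenave2019eigenvalues} does, and deferring to the reference there is the appropriate call, matching what the paper itself does.
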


Recall that the \emph{Haussdorff distance} between two closed susbets $U,V \subset \RR$ is defined as $$
d_H (U,V) = \inf\{\eps > 0 : U \subset V + [-\eps,\eps]\ \text{and}\ V \subset U + [-\eps,\eps]\}.
$$
Also, if $(M,d)$ is a metric space, a sequence of $M$-valued random variables $(X_n)_n$ converges \emph{in probability} to an $M$-valued random variable $Y$ if $\PP [ d(X_n,Y) \geq \eps] \to 0$ for every $\eps > 0$.

\subsection{Characterization of the limit objects}

Before exploiting Theorem \ref{thm:BC} in a limiting setting, we prove Theorem~\ref{thm:char.intro}, that is, we characterize the graphs $G$ that appear in it as local limits as unimodular quasi-transitive quasi-trees. Recall that a connected bounded degree graph $G$ is \emph{quasi-transitive} if its automorphism group $\Aut (G)$ acts on $G$ with finitely many orbits. Such $G$ is said to be \emph{unimodular} when $\Aut (G)$ is unimodular as a locally compact second countable group. This definition agrees with our previous one; indeed, a quasi-transitive graph $G$ is unimodular in this sense if and only if there exists a unimodular random graph $(H,o)$ such that $H$ is isomorphic to $G$ almost surely.  Finally, recall that $G$ is a \emph{quasi-tree} if it is quasi-isometric to a tree.

We recall that the conditions whose equivalence we aim to prove are the following:

\begin{enumerate}[(i)] 
    \item \label{itm:unimod_qt_qt} $G$ is a unimodular quasi-transitive quasi-tree.

    \item \label{itm:free_qt_action_of_Fd} There exists a free quasi-transitive action $\FF_d \actson G$.
  
    \item \label{itm:hom_like_map} There exists a finite graph $H$ and a homomorphism-like map $\phi \colon \vec{E}(H) \to \FF_{d'}$  such that $G\cong \tilde H / \ker (\overline\phi)$.
\end{enumerate}

We were not able to find this characterization in the literature, as most sources did not include unimodularity as part of the topics of investigation.

\begin{proof}[Proof of Theorem~\ref{thm:char.intro}]
That (\ref{itm:hom_like_map}) implies (\ref{itm:free_qt_action_of_Fd}) follows from the fact that the group of deck transformations of the covering $G \to H$ is $\pi_1 (H) / \ker (\overline\phi)$, see for instance \cite[Proposition 1.39]{hatcher}. By the first isomorphism theorem, the latter is isomorphic to $\im (\overline\phi) \leq \FF_{d'}$, which is a finitely generated -- since $\pi_1 (H)$ is finitely generated -- free group by the Nielsen-Schreier Theorem.

\smallskip
Conversely, suppose that (\ref{itm:free_qt_action_of_Fd}) holds, aiming to prove (\ref{itm:hom_like_map}). Let $H = G/\FF_d$, and let $p \colon G \to H$ denote the corresponding covering map. The map $p$ is moreover a normal covering, as free actions on graphs are properly discontinuous, and so $p_\ast \colon \pi_1 (G) \hookrightarrow \pi_1 (H)$ with $p_\ast (\pi_1 (G)) \trianglelefteq \pi_1 (H)$ and $\pi_1 (H) / p_\ast (\pi_1 (G)) \cong \FF_d$. We set $d'=d$, and construct a homomorphism-like map $\phi \colon \vec{E}(H) \to \FF_d$ as follows. Let $T$ be a spanning tree for $H$. For every edge in $T$ declare its $\phi$ image to be the identity. For an oriented edge $e=(u,v)$ not in $T$ let $g_e$ denote the corresponding generator of $\pi_1(H)$, that is, the (homotopy class of the) returning walk $(w(o,u),e,w(v,o))$, where $w(o,x)$ is the unique oriented path from $o$ to $x$ in $T$. For such an edge $e$ let $\phi(e) = \psi(g_e)$ where $\psi$ denotes the quotient map $\pi_1 (H) \to\pi_1 (H) / p_\ast (\pi_1 (G)) \cong \FF_d$. Clearly we obtain a homomorphism-like map $\phi$ such that $\ker (\overline \phi) = p_\ast (\pi_1 (G))$, and therefore $\tilde H / \ker (\overline\phi) \cong G$. 

\smallskip
Let us now show that (\ref{itm:free_qt_action_of_Fd}) implies (\ref{itm:unimod_qt_qt}). On the one hand, it is easy to see that $G$ is quasi-isometric to $\FF_d$, hence it is a quasi-tree. On the other hand, it follows from \cite[Exercise 8.8]{LP} that $\Aut (G)$ is unimodular quasi-transitive, as it contains a quasi-transitive subgroup isomorphic to the unimodular $\FF_d$.

\smallskip
Lastly, we show that (\ref{itm:unimod_qt_qt}) implies (\ref{itm:free_qt_action_of_Fd}).  By \cite[Theorem 7.4]{Stallings} the graph $G$ admits an $\Aut (G)$-invariant tree-decomposition $(T,\cal V)$ such that $\Aut (G)$ acts on the edges of $T$ quasi-transitively and the elements of $\cal V$ have at most $K\in \NN$ many elements. Let $H$ be the image of $\Aut (G)$ in $\Aut (T)$, and write $\psi: \Aut (G) \to H$ for the homomorphism.

First, we argue that $H$ is unimodular. Indeed, $H \cong \Aut(G)/\ker(\psi)$, and $\ker(\psi)$ contains those automorphisms of $G$ that set-wise fix all parts of the tree-decomposition. Notice that $\ker(\psi)$ is a compact subgroup, implying that the quotient $H$ is unimodular. 

Second, by \cite[Corollary 4.8]{BK} the group $H$ contains a free group $\FF_d:= \langle s_1, \dots, s_d\rangle$ acting freely quasi-transitively on $T$. (Acting without inversions can be assumed upon passing to a barycentric subdivision, see \cite[Proposition 6.5]{Bass}). Let $\tilde s_1, \dots, \tilde s_n \in \Aut (G)$ be such that $\psi(\tilde s_i) = s_i$ for each $i$. It is easy to see that $\Gamma := \langle \tilde s_1, \dots \tilde s_d\rangle \leq \Aut (G)$ is a free group. Moreover, we claim that $\Gamma$ acts on $G$ quasi-transitively. Therefore, if $\FF_d$ acts on $T$ with $N$ many orbits, there can be at most $KN$ $\FF_d$-orbits on $G$. Since every vertex is in finitely many parts of the tree decomposition and $\FF_d$ acts freely on $T$, the stabilizers of the action of $\Gamma$ on $G$ are finite. However, as $\Gamma$ is a free group, this implies that stabilizers are trivial.
\end{proof}

\subsection{Quasi-random colorings}

The last ingredient of the proof of Theorem~\ref{thm:qt.spec.eq} is a quasi-random coloring of the $\phi$-random lifts. Let $(H_n)_n$ be $\varphi$-random lifts of $H$, and $(G,o)$ their Benjamini-Schramm limit. Note that by quasi-transitivity $(G,o)$ takes finitely many possible values: for $h\in V(H)$ and any $v, v' \in V(G)$ that are mapped to $h$ under the covering map $G \to H$, $(G,v) \cong (G,v')$ as deterministic rooted graphs. By slight abuse of notation, we write $(G,h)$ for this deterministic graph.

Let $(G,o,c)$ and $(G,h,c)$ be the $[k]$-Bernoulli colouring of $(G,o)$ and $(G,h)$ respectively. That is, in $(G,h,c)$ the rooted graph is the same with probability 1, but the coloring is iid uniform random. In $(G,o,c)$ the rooted graph is also random, in fact $(G,o,c)$ is the convex combination of the $(G,h,c)$'s as $h$ varies over $H$. By a standard probabilistic construction, we can build \emph{deterministic} colorings of the $H_n$ that model each $(G,h,c)$ if sampled uniformly from $\{h\}\times[n]$ (that is, the preimage of $h$).

\begin{lemma}\label{lemma:quasi.random}
     With probability 1 the random graphs $H_n$, admit $[k]$-colourings $c_n: V(H_n) \to [k]$ such that 
     for any $h \in V(H)$, by picking a uniform random vertex $v$ from $p_n^{-1}(h)$ we have $(H_n,v,c_n) \to (G,h,c)$ in distribution. In particular,  $(H_n,v_n) \to (G,o,c)$ in the coloured Benjamini-Schramm sense.
\end{lemma}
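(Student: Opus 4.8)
The plan is to build the colorings $c_n$ by an independent, uniformly random draw on the fibers, and then argue that almost surely this satisfies the stated local convergence. Concretely, for each $n$, assign to every vertex $(h,i)\in V(H)\times[n]$ an independent, uniform $[k]$-valued color $c_n(h,i)$, with all these choices independent of the random permutations $S_1,\dots,S_d$ defining $H_n$. Fix $h\in V(H)$ and a finite radius $r$. What we need is that the empirical distribution of the rooted colored $r$-ball $\bigl(B_{H_n}(v,r),\,v,\,c_n|_{B_{H_n}(v,r)}\bigr)$, as $v$ ranges uniformly over $p_n^{-1}(h)$, converges to the law of $\bigl(B_{G}(h,r),\,h,\,c|_{B_G(h,r)}\bigr)$ under the $[k]$-Bernoulli coloring. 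Since $[k]$ is finite and $r$ is fixed there are only finitely many isomorphism types of colored $r$-balls to track, so it suffices to prove convergence of each such empirical frequency.

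The first step is the uncolored local convergence on fibers: with probability 1, for uniformly random $v\in p_n^{-1}(h)$ the ball $B_{H_n}(v,r)$ converges in distribution to $B_G(h,r)$. This is the ``easy'' part mentioned before Theorem \ref{thm:BC} (local convergence of $\phi$-random lifts to $G=\tilde H/\ker(\overline\phi)$), refined to the statement that the limit is deterministic when one restricts to a single fiber — which holds because $(G,h)$ is a fixed rooted graph by quasi-transitivity. In particular, the proportion of $v\in p_n^{-1}(h)$ whose $r$-ball is a tree (equivalently, an injective lift, i.e.\ the covering map is injective on $B_{H_n}(v,r)$) can be taken to be $1-o(1)$ only in the tree case; in general one gets convergence of the $r$-ball type frequencies to those prescribed by $(G,h)$. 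The key point is that on the set of vertices whose $r$-ball has a prescribed isomorphism type, the vertices appearing in that ball belong to distinct fibers or, when they coincide, do so in a controlled, asymptotically negligible or structurally fixed way — so the colors $c_n$ restricted to such a ball are asymptotically an iid uniform $[k]$-labeling, matching the Bernoulli coloring $c$ on $B_G(h,r)$.

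The second step is then a concentration/law-of-large-numbers argument: conditionally on the graphs $H_n$ (i.e.\ on the $S_i$'s), the colored-ball frequency for a given colored type $\tau$ is an average over $v\in p_n^{-1}(h)$ of indicator variables that are determined by the colors in $B_{H_n}(v,r)$; grouping $v$'s with disjoint balls gives enough independence to apply a second-moment or bounded-differences estimate showing this average concentrates around its conditional mean, and the conditional mean converges to $\Prob[\text{Bernoulli coloring of }B_G(h,r)\cong\tau]$ by step one. A Borel--Cantelli argument over $n$ (using that there are finitely many $h$, $r$, $\tau$ to handle and that we may restrict to a countable cofinal set of $(r,\varepsilon)$) upgrades convergence in probability to almost sure convergence, yielding the desired deterministic colorings $c_n$ on an event of probability 1. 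The final sentence of the lemma, that $(H_n,v_n)\to(G,o,c)$ in the colored Benjamini--Schramm sense for a globally uniform root $v_n$, follows because a uniform vertex of $H_n$ is a uniform fiber-vertex over a uniform $h\in V(H)$ (all fibers have size $n$), and $(G,o,c)$ is by definition the corresponding convex combination of the $(G,h,c)$'s over $h\in V(H)$.

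I expect the main obstacle to be the bookkeeping in step one when $G$ is not a tree: one must check that whenever two vertices in the ball $B_{H_n}(v,r)$ lie in the same fiber (which happens precisely because some cycle of $H$ does not open up, i.e.\ is dictated by $\ker(\overline\phi)$), they are forced to receive \emph{the same} color by construction if and only if $G$ identifies them — and that this is automatically consistent with $(G,h,c)$ being the Bernoulli coloring of the fixed graph $G$, since we color $V(H)\times[n]$, not the abstract edge-structure. In other words, the coloring is a genuine function on $V(H_n)$, so it descends correctly through the identifications that produce cycles in $G$, and the asymptotic iid-ness of colors on a ball of fixed type is exactly the statement that, away from an $o(1)$-fraction of bad roots, the distinct vertices of $B_{H_n}(v,r)$ correspond bijectively to distinct vertices of $B_G(h,r)$ living in distinct fibers.
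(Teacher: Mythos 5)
Your plan follows essentially the same route as the paper's proof: establish the uncolored, fiber-wise Benjamini--Schramm convergence $(H_n,v)\to(G,h)$, color $V(H_n)$ by an iid uniform $[k]$-valued labeling independent of the graph randomness, and then run a second-moment/concentration plus Borel--Cantelli argument to upgrade to almost sure convergence of the empirical colored-ball statistics. The paper compresses the concentration step into a reference to a standard argument (HLS, Lemma 7.9), which you usefully spell out; the deduction of the ``in particular'' clause by averaging over fibers is also the same.

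One remark on your flagged ``main obstacle'': it is in fact not one, and the observation you make at the very end already resolves it. Since $c_n$ is assigned independently per vertex of $V(H_n)=V(H)\times[n]$, the restriction of $c_n$ to any fixed ball $B_{H_n}(v,r)$ is \emph{exactly} iid uniform on $[k]$, regardless of whether distinct vertices of that ball happen to lie in the same fiber of $p_n$. Likewise the Bernoulli coloring on $B_G(h,r)$ is iid per vertex of $G$, not per vertex of $H$. Hence an isomorphism of the \emph{uncolored} rooted $r$-balls automatically transports the conditional color law exactly, with no asymptotic correction; there is no need to track whether ball vertices lie in distinct fibers or whether the lift is locally injective. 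The only quantitative work is the uncolored ball-type frequency convergence on fibers (which the paper's reduction --- $(G,h)$ deterministic, so the proportion of $v\in p_n^{-1}(h)$ with $B_{H_n}(v,r)\cong B_G(h,r)$ tends to $1$ --- makes especially clean) and the concentration of the color-type frequencies, which you already handle.
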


\begin{proof}
    We generate a realization of $H_n$. With probability 1, the $H_n$ Benjamini-Schramm converge to $(G,o)$, moreover, this convergence holds individually for all $(G,h)$ when sampling the root of $H_n$ among those covering $h$. That is, if $v$ is as in the statement of the lemma, $(H_n, v) \to (G,h)$ in distribution. As $(G,h)$ is deterministic, this simply means that for any radius $r$ the proportion of vertices in $p_n^{-1}(h)$ whose $r$-neighborhood is isomorphic to $B_G(r,h)$ converges to 1.
    
    Having fixed the graphs, we pick the colours $c_n(u)$ independently and uniformly randomly from $[k]$ for all $u \in V(H_n)$. It is not immediately obvious, but a standard argument (using the convergence property from the previous paragraph) to check that the conclusions of the lemma hold for $c_n$ with probability 1, that is, for almost every realization of $c_n$. See \cite[Lemma 7.9]{HLS} for a similar argument.  
\end{proof}



    



    

\subsection{Spectra of quasi-transitive quasi-trees}

We can now address the proof of the main theorem of the section. Let $G$ be a unimodular quasi-transitive quasi-tree. We denote the associated unimodular random graph by $(G,o)$ and its Bernoulli graphing by $(\cal B,\GG^+,\mu^\ast)$. We aim to show $\sigma_R (\cal B) = \sigma (G)$.

\begin{proof}[Proof of Theorem \ref{thm:qt.spec.eq}]
    By Theorem \ref{thm:spec.containment} we have that $\sigma (G) \subset \sigma_R (\cal B)$. Suppose for the sake of contradiction that this inclusion is strict, so there exists $\lambda \in \sigma_R (\cal B) \backslash \sigma (G)$. Since $\sigma (G)$ is closed, there exists an $\eps > 0$ such that $(\lambda + [-\eps,\eps] )\cap (\sigma (G) + [-\eps,\eps]) = \emptyset$. 

By Theorem \ref{thm:char.intro}, there exists a finite graph $H$ and a homomorphism-like $\phi$ such that $G \cong \tilde H / \ker (\overline{\phi})$. Let $(H_n)_n$ be the sequence of $\phi$-random lifts. Theorem~\ref{thm:BC} states that $$
    \PP[\sigma (H_n) \subset \sigma (G) + [-\eps,\eps]] \to 1.
    $$ 
In particular, this implies that $M_n - \lambda$ is bounded below by $\eps$ with high probability, where $M_n$ denotes the Markov operator of $H_n$ on $\ell^2 (H_n,\deg)$. Our aim is to contradict this. 

    For the remainder of the proof, we assume, again by the Isomorphism Theorem for standard probability spaces \cite[Theorem 17.41]{kech}, that the Bernoulli random labels are of the form $\omega \in 2^\NN$, with each $\omega_i\in \{0,1\}$ sampled uniformly and independently at random. Under this assumption, we say that $f\in L^2 (\GG^+,\mu^\ast)$ has complexity at most $k\in \NN$ if for any two $(G,\xi,o)$ and $(G,\eta,o)$ we have that $$
    \xi|_{[k]}= \eta'|_{[k]} \Rightarrow f(G,\xi,o) = f(G,\eta,o).
    $$
    Let $f \in  R$  
    be a normalised block factor of radius $r$ and complexity at most $k$ such that $\|(M - \lambda) f\|< \eps$. Such $f$ exists by hypothesis and density of block factors in $R$, see Corollary \ref{cor:proj.block}. Let $\underline f \colon \GG^{r,k} \to \RR$ be the map defining the block factor $f$, where $\GG^{r,k}$ is the finite set of rooted graphs of radius at most $r$, labelled by sequences in $2^k$. 
    
    If we let $(c_n)_n$ be the colourings of Lemma \ref{lemma:quasi.random}, then we may let $g_n := \underline f \circ s_n$, where $s_n$ is the sampling map on $H_n$. We then have that $$
    \|g_n\| = \|\underline f\|_{L^2 (\GG^+, \mu_n)} \to   \|\underline f\|_{L^2 (\GG^+, \mu^\ast_{r,k})} = 1 
    $$
    where $\mu_n$ is the pushforward by $s_n$ of the counting measure on the $H_n$, and $\mu^\ast_{r,k}$ the corresponding for $\mu^\ast$. A similar argument shows that $$
    \|(M - \lambda) g_n\| \to \|(M - \lambda) f\|. 
    $$

    In order to conclude the proof, we only need to show that the $g_n$ may be assumed to be in the orthogonal to $H$, up to small editions. However, by Lemma \ref{lemma:quasi.random}, we have that for every $h\in V(H)$, letting $\tilde h_n$ denote its pre-image by the $n$-th covering map and $\mu_n^h$ the distribution on $\GG$ by uniformly sampling on $\tilde h_n$, $$
    \langle g_n , \one_{\tilde h _n}\rangle = \EE_{\mu_n^h}[ \underline f ] \to \EE_\alpha [f ] = 0,
    $$
    for some fixed type $\alpha \in \GG$. This $\alpha$ is determined by the pre-images of $h$ in the universal cover. By the characterization Corollary \ref{cor:random.char} of the random spectrum the conclusion follows. Since the subspace generated by the vertices of $H$ has dimension $|V(H)|$, the latter inequality implies that $\|g_n - P_ng_n\| \to 0$, where $P_n$ is the projection on the orthogonal of $H$ in $\ell^2 (H_n,\deg)$. 

    We obtain a contradiction with Theorem \ref{thm:BC} for $n$ sufficiently large.
\end{proof}

\section{Further directions} \label{sec:further_directions}

It is natural to ask if Friedman's theorem can be extended to large random graphs that are not regular, but have a fixed degree distribution. One can generate such graphs by the \emph{configuration model} as follows. Fix a degree distribution $(p_i)_{i=3}^{\infty}$. (We assume $i \geq 3$ to exclude models without a spectral gap.) Build a graph on $n$ vertices by attaching $i$ half-edges to $p_i$ proportion of them. Assume for simplicity that $p_in \in \mathbb{N}$ for all $i$. Pick a uniform random matching on the half-edges, and connect the pairs to form actual edges. 

\begin{question} \label{qst:irreg_friedman}
    Are random graphs generated by the configuration model with fixed degree distribution almost Ramanujan with high probability?
\end{question}

Dropping the regularity assumption might seem like a technicality, but in fact it introduces a conceptual challenge, as the universal cover of the finite graphs is not a fixed, deterministic object anymore. The Benjamini-Schramm limit is a unimdular Galton-Watson tree $(T,o)$, where the root has degree distribution $(p_i)$, and every later vertex has $(i-1)$ children with probability $q_{i-1}=\frac{ip_i}{\sum i p_i}$. It is not known if the Bernoulli graphing of $(T,o)$ is Ramanujan, even though this should be a significantly easier question.

\begin{question} \label{qst:UGW_ramanujan}
    Is the Bernoulli graphing $\mathcal{T}$ of a unimodular Galton-Watson tree $(T,o)$ Ramanujan? That is, do we have $\rho(\mathcal{T}) = \rho(T,o)$?
\end{question}

A positive answer to Question~\ref{qst:irreg_friedman} would imply the same for Question~\ref{qst:UGW_ramanujan}. By Theorem~\ref{thm:intro.rel.ram}, the latter question reduces to $\rho_S(\mathcal{T}) \leq \rho(T,o)$. Note also that $\rho(T,o)$ only depends on the minimum degree $d_{\min}=\min\{i \mid p_i>0\}$, in fact $\rho(T,o)=\rho(T_{d_{\min}})$.

In terms of relating the local and global spectrum, we do not know if Theorem~\ref{thm:qt.spec.eq} holds in general.

\begin{question}
    Does $\sigma_R(\mathcal{G}) = \sigma(G,o)$ hold for all unimodular random graphs $(G,o)$?
\end{question}

Finally, we believe the connection to finite random graphs raises broader questions worth investigating. A few are listed here as examples. 

\begin{itemize}
    \item What other URGs appear as limits of finite random graph models where relative Ramanujan statements have been established? (For example, one would expect \cite{magee2025strong}  to imply $\rho(\mathcal{G})=\rho(G,o)$ for Cayley graphs of surface groups.) 
    
    \item Can one prove relative Ramanujan statements for random graph models where the limits are non-atomic URGs? (The irregular configuration model above would be such an example.)
    
    \item Does the strong convergence of random representations (as  established in \cite{bordenave2019eigenvalues, magee2025strong}) provide more information on the spectrum of the limit?
\end{itemize}


\def\MR#1{}
\bibliographystyle{amsalpha} 
\bibliography{bib}

\end{document}